\newcommand{\Z}{\mathbb{Z}}	
\newcommand{\R}{\mathbb{R}}	
\newcommand{\N}{\mathbb{N}}	
\newcommand{\T}{\mathbb{T}}
\renewcommand{\vec}[1]{\boldsymbol{#1}}
\newcommand{\paren}[1]{\left( #1 \right)}
\newcommand{\brac}[1]{\left[ #1 \right]}
\newcommand{\set}[1]{\left\{#1\right\}}
\newcommand{\interior}[1]{%
  {\kern0pt#1}^{\mathrm{o}}%
}
\newtheorem{theorem}{Theorem}
\newtheorem{lemma}[theorem]{Lemma}
\newtheorem{prop}[theorem]{Proposition}
\newtheorem{question}[theorem]{Question}
\newtheorem{conjecture}[theorem]{Conjecture}
\newtheorem{corollary}[theorem]{Corollary}
\title{On the Continuity of Enhancement Percolation}
\author{Paul Duncan}
\author{Benjamin Schweinhart}
\author{David Sivakoff}
\date{\today}
\begin{document}
\begin{abstract}
    We study bond percolation in $\Z^d$ with an unbounded family of enhancements that enable additional bonds to act as open. A natural question is whether percolation occurs in this model if and only if percolation also occurs in the system with a finite subcollection of enhancements. We give an affirmative answer in dimension $d=2$ for symmetric families of connected enhancements, and in dimensions $d\ge 3$ we prove a partial result.
\end{abstract}

\maketitle

\section{Introduction}
A natural question in the study of long-range percolation is whether a supercritical system remains supercritical if edges above a certain fixed length are removed. This truncation phenomenon was originally considered in~\cite{meester1996continuity} and has since been an active area of study~\cite{menshikov2001note,van2016truncated,baumler2024truncation}. We address the analogous question for enhancement percolation wherein additional connections are included based on the presence of patterns in an underlying bond percolation model. The prototypical example of enhancement percolation is entanglement percolation on $\Z^3.$ Consider bond percolation $P$ with edge probability $p$ on $\Z^3$ and declare two vertices to be connected if they are entangled in the sense that there is no dual sphere separating them in $\R^3\setminus P.$ It was shown in \cite{AG91,balister2014essential} that the critical probability $p_c^{\mathrm{ent}}$ for entanglement percolation on $\Z^3$ is strictly less than the critical probability for bond percolation $p_c\paren{\Z^3}.$ 

One formulation of entanglement percolation can be rephrased as a long-range percolation model. A finite graph $G\subset \Z^3$ is entangled if there is no sphere in $\R^3\setminus G$ separating any of the vertices of $G$ (for example, take $G$ to be a Hopf link). Denote by $\mathscr{F}_k$  the set of entangled subgraphs of $\Z^3$ of $\ell^{\infty}$ diameter between $2^k$ and $2^{k+1}.$ Define level $k$ entanglement percolation $P_k$ to be the graph obtained from $P$ by adding connections between vertices contained in a graph of $\bigcup_{j=0}^k\mathscr{F}_k.$ It is natural to ask whether the critical probabilities $p_k^{\mathrm{ent}}$ for the percolations $P_k$ are continuous in the sense that
$$\lim_{k\to\infty} p_k^{\mathrm{ent}} = p_c^{\mathrm{ent}}\,.$$
In a separate paper, we prove a number of results about entanglement percolation conditional on an affirmative answer to a related question~\cite{duncan2025dual}.

The general continuity question for enhancement percolation is as follows. An enhancement $E$ is a subgraph $T$ of the nearest-neighbor graph $\Z^d$ together with an enhanced subgraph $S$ of the complete graph on $\Z^d.$ For convenience we assume that $T\subset S.$ Enhancement percolation with probability $p$ on $\Z^d$ --- denoted $P_{\infty}=P_{\infty}\paren{p}$ --- for a set of enhancements $\mathscr{E}=\set{\paren{T_\alpha,S_\alpha}}_{\alpha\in I}$ is the graph obtained by taking bond percolation $P=P\paren{p}$ with probability $p$ and adding an appropriately transformed copy of $S_\alpha$ for every subgraph of $P$ congruent to $T_\alpha$ under the action of $SO(d)$ (note: while we assume that the family of enhancements is symmetric, anisotropic enhancement percolation may also be of interest). As before, let $\mathscr{F}_k$ be the collection of enhancements $E=\paren{T,S}$ so that the $l^{\infty}$ diameter of $S$ in $\Z^d$ is between $2^k$ and $2^{k+1}.$ Define $P_k$ to be the graph obtained from $P$ by activating the enhancements in $\bigcup_{j=1}^k\mathscr{F}_j.$  

\begin{question}\label{question:cont}
Let $p_c^e$ and $p_k^e$ denote the critical probabilities for $P_{\infty}$ and $P_k,$ respectively. Under what hypotheses do we have that
$$\lim_{k\to\infty} p_k^e=p_c^e\,?$$
\end{question}

Our main result answers this question for a special class of enhancement percolations on $\Z^2.$

\begin{theorem}\label{theorem:z2continuity}
Let $\mathscr{E}=\set{\paren{T_\alpha,S_\alpha}}_{\alpha\in I}$ be a set of enhancements for bond percolation on $\Z^2$ so that $S_{\alpha}$ is a connected subgraph of the nearest-neighbor graph $\Z^2$ for each $\alpha\in I.$ Then
$$\lim_{k\to\infty} p_k^e=p_c^e\,.$$
\end{theorem}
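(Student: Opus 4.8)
The strategy is a monotonicity reduction followed by a multi-scale block argument, with planar duality used to pin down the supercritical phase. Since each activated enhancement has a single finite scale, $P_k\uparrow P_\infty$, so $p_c^e\le p_{k+1}^e\le p_k^e$ and $p_*^e:=\lim_k p_k^e$ exists with $p_*^e\ge p_c^e$; it therefore suffices to prove $p_*^e\le p_c^e$. Fix $p>p_c^e$. I will show that $P_k(p)$ percolates for all large $k$, which forces $p_*^e\le p$, and then let $p\downarrow p_c^e$. The observation that makes a reduction to a finite window possible is that for any fixed finite edge set $F$ the restriction $P_k|_F$ stabilizes to $P_\infty|_F$ as $k\to\infty$ — an edge of $P_\infty$ is activated at some finite scale, and an edge not in $P_\infty$ lies in no $P_k$ — so that $\prob(P_k\in G)\to\prob(P_\infty\in G)$ for every event $G$ depending on finitely many edges.

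\textbf{Locating the phase and setting up blocks.} Because every $S_\alpha$ consists of nearest-neighbor edges, $P_\infty(p)\subseteq\Z^2$ and its planar dual $P_\infty^*$ is defined; and since $P_\infty$ and each $P_k$ are increasing functions of the i.i.d.\ field $P$, these configurations and their complements are positively associated, so the Burton--Keane and Zhang arguments are available (the mild finite-energy-type hypotheses being routine, as the enhancements only add edges). For $p>p_c^e$ the point $p$ lies strictly inside the supercritical phase of $P_\infty$, hence $P_\infty^*(p)$ is strictly subcritical and, by sharpness of the phase transition, has exponentially decaying connectivities, so the probability that an $L\times L$ box is crossed by $P_\infty$ in both coordinate directions tends to $1$ as $L\to\infty$. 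Fix $L$ so this probability exceeds $1-\varepsilon$; by the stabilization above, for $k$ large the same box is crossed by $P_k$ with probability exceeding $1-2\varepsilon$. Declaring an $L$-box good when $P_k$ crosses it both ways, the good-block indicators form a translation-invariant field that is $r$-dependent with $r=O(2^k/L)$, since $P_k$ has range $2^{k+1}$; if $1-2\varepsilon$ beats the Liggett--Schonmann--Stacey threshold for $r$-dependent fields, good blocks percolate, hence so does $P_k$, and $p_*^e\le p$.

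\textbf{The main obstacle.} The gap in the last step is a circularity: for the block field to be weakly dependent we need $L$ large compared with the range $2^{k+1}$ of $P_k$, but $L$ was selected from $P_\infty$ before $k$, so as $k\to\infty$ with $L$ fixed the dependence parameter $r$ diverges and the Liggett--Schonmann--Stacey threshold approaches $1$. Removing this circularity requires a quantitative, scale-by-scale estimate — in essence, that an $L$-box is crossed by $P_\infty$ using only enhancements of scale $O(\log L)$, up to an error summable in the scale — and this is the one place where the hypotheses $d=2$ and "$S_\alpha$ connected" are genuinely used. I would prove it by induction on the scale: propagate good control of $P_k$-crossings from scale $L_k$ to $L_{k+1}\asymp L_k$ by concatenation, and absorb the newly available large enhancements via planarity, the point being that a connected enhancement of diameter exceeding $2^k$ that is essential for a crossing at its own scale must be traced by a connected obstruction of comparable size in the strictly subcritical dual $P_\infty^*$, so a union bound over positions and scales against the dual's exponential estimates closes the induction. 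Executing this trade quantitatively — keeping the good-block error stretched-exponentially small as the scale grows — is the main technical hurdle, and the need for connectedness (so that an enhancement separates the plane) together with $d=2$ (so that the dual is again a planar lattice) is precisely why dimensions $d\ge 3$, and the disconnected enhancements arising from entanglement, are more delicate.
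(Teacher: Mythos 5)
The monotonicity reduction and the observation that $P_k\vert_F$ stabilizes to $P_\infty\vert_F$ on finite edge sets are fine, but the argument breaks exactly where the real work starts. You locate the phase by asserting that for $p>p_c^e$ the dual of $P_\infty$ is strictly subcritical with exponentially decaying connectivities ``by sharpness of the phase transition,'' and deduce that $L\times L$ crossing probabilities for $P_\infty$ tend to $1$. Sharpness is a theorem about Bernoulli (and a few other specific) models; for $P_\infty$, a non-local monotone function of the i.i.d.\ field with unbounded-range dependence, no such result is known --- the paper explicitly records sharpness of the enhancement transition as an open question and proves nothing toward it. Without quantitative dual decay you do not even get the qualitative statement that supercriticality of $P_\infty$ forces box-crossing probabilities to tend to $1$ (blocking a crossing is a dual crossing event, and a union bound over its starting points needs more than $\mathbb{P}_p\paren{0\leftrightarrow\partial\Lambda_L}\to 0$); moreover the Burton--Keane/Zhang step needs insertion tolerance for the enhanced configuration, which is not automatic since one cannot insert a single enhanced edge without altering the underlying field. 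On top of this, you yourself identify the decisive obstacle: with $L$ chosen from $P_\infty$ and $k\to\infty$, the block field is only $O\paren{2^k/L}$-dependent, so Liggett--Schonmann--Stacey gives nothing, and the proposed repair (induction on scales, absorbing newly available enhancements via planarity against the dual's exponential estimates) is precisely the unproved part and leans again on the unavailable dual decay. As it stands this is a plan whose central step is missing, not a proof.

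For contrast, the paper never needs a supercritical/subcritical dichotomy for $P_\infty$ or its dual. It introduces an auxiliary threshold $q_c$ measuring whether arbitrarily large enhancements keep activating near a fixed point. Below $q_c$, Friedgut--Kalai sharp thresholds applied to a symmetric torus version of the activation events $G_k$ make $\sum_k\mathbb{P}_p\paren{G_k}$ finite, so percolation of $P_\infty$ forces one-arm events for the truncation $P_{k-1}$ at scale $2^k$ with uniformly positive probability; a Bollob\'as--Riordan-style sprinkling lemma upgrades these to high-probability crossings of $\Lambda_{6\cdot 2^k,\,2\cdot 2^k}$ in $P_{k-1}$ at a slightly larger parameter, and the renormalization uses blocks of side comparable to $2^k$, so the block process is $2$-dependent uniformly in $k$ --- the block scale grows with the truncation level, which is exactly how the circularity you ran into is avoided. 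Above $q_c$, connectedness of $S_\alpha$ is used quite differently from your duality argument: an activated large enhancement is itself a connected set crossing an annulus, hence yields short-way rectangle crossings, which the K\"ohler-Schindler--Tassion theorem converts into long-way crossings, and the same sprinkling and renormalization give percolation of a finite truncation. To salvage your route you would need either a sharpness theorem for $P_\infty$ (open) or to re-anchor the block scale to the truncation scale as the paper does.
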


The hypotheses may be relaxed slightly: the same arguments work for classes of enhancements where we do not require $T_{\alpha}\subset S_{\alpha}$ but instead that there is a $c>0$ so that $\mathrm{diam}\, T_{\alpha}\cup S_{\alpha}\leq c \,\mathrm{diam}\, S_{\alpha}$ for all $\alpha \in I.$ In particular, the theorem holds for enhancements of the type depicted in Figure~\ref{fig:junta} or (say) one where a crossing of a box is added if the density of edges in the box exceeds a certain threshold.

\begin{figure}[t]
    \centering
    \includegraphics[width=.6\textwidth]{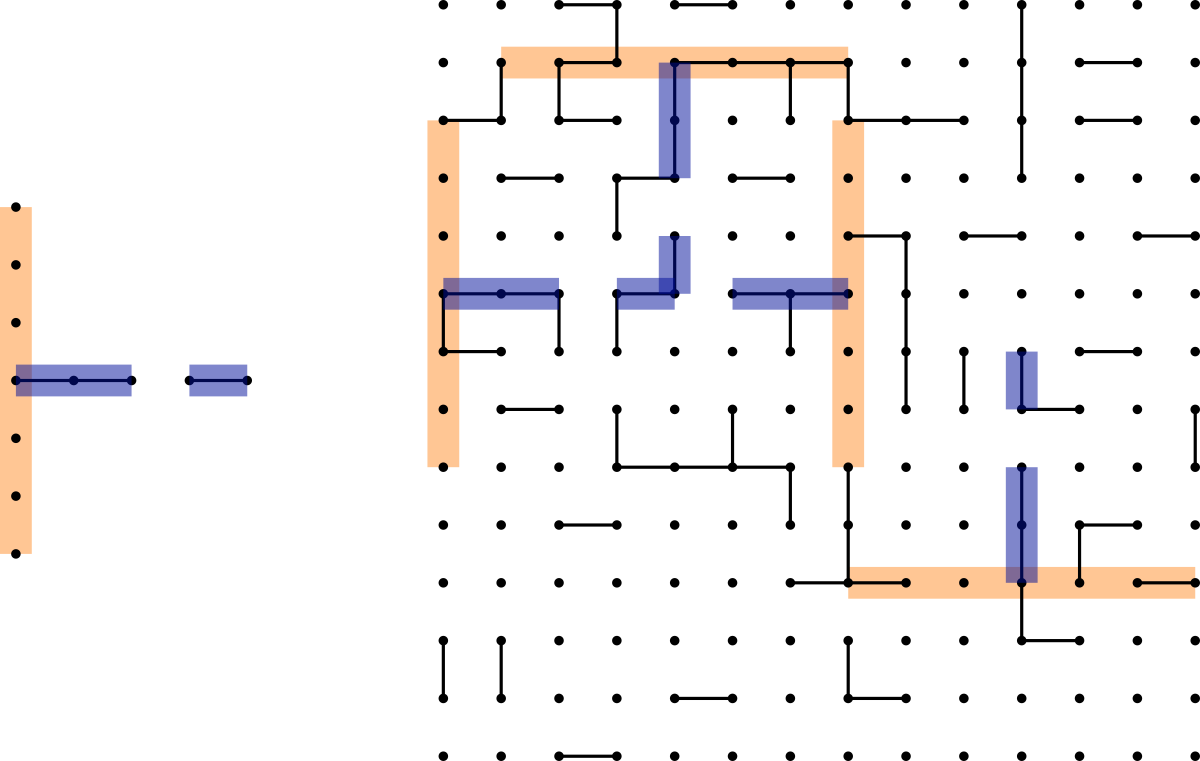}
    \caption{An illustration of enhancement percolation with a single enhancement. $T$ is shown in blue and $S\setminus T$ is shown in orange. For every subgraph of Bernoulli percolation (shown in black) congruent to the blue graph an appropriately transformed collection of (lighter) orange edges is added. Here, a horizontal crossing occurs in the enhanced percolation but not in the original percolation.} 
    \label{fig:junta}
\end{figure}

A second question concerns the sharpness of the enhancement percolation transition. Let $C$ be the component of the origin in $P_{\infty}.$  
Set
\[\pi_c^e = \inf\{p:\mathbb{E}(|C|) = \infty\}\,.\]
 \begin{question}
 Under what hypotheses do we have that
$$p_c^e=\pi_c^{e}?$$
\end{question}
While this question is interesting, we do not present any results towards its resolution. 

\subsection{Notation}

We establish some notation. Let $\gamma\in SO(d)$ be a transformation and $\mathscr{F}\subset \mathscr{E}$ be a collection of enhancements. We say that $\paren{\gamma\paren{T},\gamma\paren{S}}$ is an activated enhancement of $\mathscr{F}$ with anchor at $\gamma((0,0))$ if $\gamma(T)\subset P$ for some enhancement $(T,S)\in \mathscr{F}$. We assume that every $\paren{T_{\alpha},S_{\alpha}}$ in $\mathscr{E}$ appears at most once up to congruence and that $\paren{0,0}$ is one of the vertices of $S_{\alpha}.$  Write 
\[\Lambda_n \coloneqq \brac{-n,n}^d\]
for the hypercube of side length $2n$ and 
\[\Lambda_{n_1,n_2,\ldots,n_d} \coloneqq \brac{-n_1,n_1} \times \brac{-n_2,n_2} \times \ldots \times \brac{-n_d,n_d}\]
for a box with the side lengths $2n_1,\ldots,2n_d.$ Define the radius of an enhancement $\paren{T_{\alpha},S_{\alpha}}$ as the smallest $n$ for which $S_{\alpha} \subset \Lambda_n.$ 

For a rectangle $R,$ let $H\paren{R}$ be the event that there is a horizontal crossing of $R,$ that is, a path of open edges in $R$ between the two faces of $R$ that are orthogonal to the first coordinate direction vector $\vec{e}_1.$ Likewise, let $V\paren{R}$ be the event that there is a vertical crossing, or a path between the faces of $R$ that are orthogonal to $\vec{e}_2.$

\section{Continuity for Enhancements in the Plane}

Rather than working with the usual supercritical and subcritical regimes directly, we instead define a new critical probability related to the presence of arbitrarily large enhancements near the origin. Let $\mathscr{E}=\set{\mathcal{E}_j}_{j\in \N}$ be a family of enhancements. 

Let $G_k$ be the event that the anchor of an activated enhancement of $\mathscr{F}_k$ is contained in $\Lambda_{2^{k}}.$ Set
$$q_c = q_c\paren{\mathscr{E}} \coloneqq \sup\set{p:\limsup \mathbb{P}_p\paren{G_k}=0}\,.$$
Note that we would obtain the same threshold if we modify the definition of $G_k$ by replacing $2^{k}$ with $c2^{k}$ for some $c>0.$  Theorem~\ref{theorem:z2continuity} is a consequence of the following result.

\begin{theorem}\label{theorem:qccontinuity}
Assume the same hypotheses as Theorem~\ref{theorem:z2continuity}. If $p<q_c$ then $P_{\infty}$ percolates if and only if $P_{N}$ percolates for some finite $N.$ On the other hand, if $p>q_c$ then $P_N$ percolates for some finite value of $N.$ 
\end{theorem}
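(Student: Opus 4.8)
\begin{sproof}
The plan is to recast both halves of the statement in terms of box-crossing probabilities and to exploit the threshold $q_c$ through these. One inclusion is immediate: every enhancement activated in $P_N$ is also activated in $P_\infty$, so $P_N\subseteq P_\infty$ as random subgraphs of the complete graph on $\Z^2$, and therefore percolation of $P_N$ for some finite $N$ forces percolation of $P_\infty$. For the remaining work it is useful to record that each $P_N$ is a deterministic, coordinatewise nondecreasing function of the Bernoulli configuration with range of dependence $O(2^{N})$; hence $P_N$ is positively associated (Harris--FKG), invariant under the lattice symmetries, and accessible to the standard planar toolkit --- gluing of crossings, RSW, and finite-size criteria for finitely dependent models. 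One checks likewise that $G_k$, and its variants obtained by replacing $\Lambda_{2^k}$ with $\Lambda_{c2^k}$, are increasing events; this will be used repeatedly.

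\emph{The case $p>q_c$.} Here $\limsup_k\prob_p(G_k)>0$, so there is $\delta>0$ and an infinite set $\mathcal K$ of scales with $\prob_p(G_k)\ge\delta$ for $k\in\mathcal K$. On $G_k$ there is an activated enhancement of $\mathscr F_k$; since $S_\alpha$ is a \emph{connected} nearest-neighbour subgraph of $\ell^\infty$-diameter at least $2^k$ (with $T_\alpha\subseteq S_\alpha$ of diameter at most $2^{k+1}$), this yields, inside $\Lambda_{3\cdot2^k}$, a set that is connected in $P_k$, has $\ell^\infty$-diameter at least $2^k$, and is anchored in $\Lambda_{2^k}$. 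Dividing a large box into well-separated $2^k$-scale cells and using independence across cells, I would boost the probability of the corresponding event --- namely that a $P_k$-connected set of diameter at least $2^k$ meets a prescribed cell and lies in its $c$-neighbourhood --- above $1-\epsilon$, for a constant $c=c(\delta,\epsilon)$. Such a connected set contains a hard-way crossing of one of boundedly many bounded-aspect-ratio rectangles at scale $2^k$; using the lattice symmetries and FKG to fix one such rectangle, and then feeding the crossings obtained at the scales $k\in\mathcal K$ into a renormalization --- e.g.\ stochastic domination of a coarse-grained field by supercritical site percolation à la Liggett--Schonmann--Stacey, which is legitimate because $P_N$ is finitely dependent --- I would conclude that $P_N$ percolates for $N$ a sufficiently large element of $\mathcal K$.

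\emph{The case $p<q_c$.} It remains to show that percolation of $P_\infty$ implies percolation of some $P_N$; equivalently, granting that $\theta_{P_N}(p)=0$ for every finite $N$, one shows $\theta_{P_\infty}(p)=0$. The origin's $P_\infty$-cluster is the increasing union of its $P_N$-clusters $C_N$, each finite by hypothesis, so it is infinite only if for infinitely many $N$ some activated enhancement of $\mathscr F_{N+1}$ meets $C_N$ and has a vertex outside it. Since $p<q_c$ gives $\prob_p(G_k)\to0$, a single ``large'' enhancement anchored near the origin is rare; the real content is to preclude a cascade through ever larger scales. I would do this by a multi-scale (van den Berg--Kesten style) coupling that ties the growth of $C_N$ to the presence of scale-$(N+1)$ enhancements anchored in a controlled neighbourhood of $C_N$, summing the contributions over $N\ge N_0$ to show that beyond some $N_0$ the cluster almost surely stops growing. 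Equivalently, one bounds $\prob_p(0\leftrightarrow\partial\Lambda_n\text{ in }P_\infty)-\prob_p(0\leftrightarrow\partial\Lambda_n\text{ in }P_N)$ uniformly in $n$ for $N$ large, and combines this with the fact that for the finitely dependent planar FKG model $P_N$, percolation is equivalent to non-decay of rectangle-crossing probabilities.

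I expect the main obstacle to be the passage, in the case $p>q_c$, from the $G_k$-statistics to honest percolation of a \emph{single finite} $P_N$. An enhancement of $\mathscr F_k$ has radius comparable to its own diameter $2^k$, so the scale at which the geometry is controlled coincides with the range of dependence, and RSW boosting of rectangle-crossing probabilities cannot be run naively at that scale. I would circumvent this by working with crossings of dyadic annuli and combining several scales $k\in\mathcal K$ at once, so that no crossing is ever needed below the dependence range --- making this precise is where the difficulty concentrates. A related, more minor, technical issue in both directions is that $P_\infty$ lacks the finite-energy property, so uniqueness of the infinite cluster and the ``percolation $\Leftrightarrow$ non-decay of crossings'' equivalence must be argued through the $P_N$'s and a limiting procedure rather than cited directly.
\end{sproof}
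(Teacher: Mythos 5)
Your high-level architecture (connected enhancements yield rectangle crossings, renormalize via Liggett--Schonmann--Stacey, and split on whether large activated enhancements appear near the origin) is the same as the paper's, but both halves of your outline have genuine gaps, and they sit exactly at the points you flag or gloss over. For $p>q_c$: from $\limsup_k\prob_p(G_k)>0$ you only get a \emph{short-way} crossing of a fixed rectangle at scale $2^k$ with probability bounded below by a small constant, and your proposed boost by independence across well-separated cells only raises the probability that such a crossing exists \emph{somewhere} in a box of side $c\,2^k$ with $c$ large; those sparse, diameter-$2^k$ pieces cannot be glued across renormalized blocks, and LSS requires long-way crossings of fixed rectangles with probability close to $1$, not probability $\delta$. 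You explicitly concede that you do not know how to run RSW at a scale comparable to the dependence range; the paper's resolution is the K\"ohler-Schindler--Tassion theorem (Theorem~\ref{thm:shortlongcrossing}), an RSW statement valid for \emph{any} positively associated measure invariant under the lattice symmetries, with no independence or mixing hypothesis, so the dependence range is simply irrelevant there. The remaining boost to $1-\epsilon$ is done not by cell-counting but by sharp thresholds: Friedgut--Kalai applied to the symmetrized event on the torus (Lemma~\ref{lemma:symcomp}, Proposition~\ref{prop:FKsummable}, Corollary~\ref{cor:summable}, giving $\limsup_k\prob_p(G_k)=1$ above $q_c$) together with a Bollob\'as--Riordan-type sharpening of crossing probabilities (Lemma~\ref{lemma:torussharp}), after which the $2$-dependent LSS comparison applies. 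None of these ingredients appear in your sketch, and your substitute (``dyadic annuli, combining several scales at once'') is left undeveloped at precisely the step where the proof lives.

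For $p<q_c$ you use only $\prob_p(G_k)\to0$, but both your ``summing the contributions over $N$'' and the paper's bound $\prob_p\paren{0 \xleftrightarrow[P_{\infty}]{} \infty} \le \prob_p\paren{0 \xleftrightarrow[P_{N-1}]{} \partial \Lambda_{2^N}} + \prob_p\paren{\bigcup_{k\ge N} G_k}$ need the tail of the union to vanish, i.e.\ \emph{summability} of $\prob_p(G_k)$; convergence of the individual terms to zero does not give this, and the paper obtains geometric decay below $q_c$ precisely from the Friedgut--Kalai derivative inequality on the torus (Proposition~\ref{prop:FKsummable}, Corollary~\ref{cor:summable}), an ingredient your multiscale coupling never supplies. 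Moreover your closing appeal to ``percolation is equivalent to non-decay of rectangle-crossing probabilities for finitely dependent planar FKG models'' is not a citable fact and is false as stated (critical Bernoulli percolation has non-decaying crossings and no percolation); some sprinkling or sharp-threshold input is unavoidable. The paper instead converts the surviving one-arm probability in $P_{k-1}$ into box crossings by gluing (Lemma~\ref{lemma:onearm}), boosts them to $1-\delta$ at a sprinkled parameter $p'>p$ via Lemma~\ref{lemma:torussharp}, and renormalizes with LSS. In short, the sharp-threshold machinery run through the torus symmetrization is the load-bearing ingredient in both regimes, and without it (or the K\"ohler-Schindler--Tassion input above $q_c$) your outline does not close.
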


This is not a usual percolation threshold in that it does not necessarily correspond to an obvious phase transition. It is instead intended to capture a change in the locality of enhancement behavior. Both above and below $q_c,$ our main tools are sharp threshold results.

\subsection{Sharpness}
For $q<q_c,$ we want to show that there is a $N$ for which 
\[\mathbb{P}_q\paren{\bigcup_{k \geq N} G_k} < 1.\]
Both the statement and the proof of the  following theorem of Friedgut and Kalai~\cite{friedgut1996every} will be useful. 

\begin{theorem}[Friedgut, Kalai]\label{theorem:FK}
    There is a universal constant $C$ so that the following holds: Let $A$ be a symmetric, increasing event on the Hamming cube $Q_n = \set{0,1}^n$ endowed with the Bernoulli product measures. If $\mathbb{P}_p\paren{A} > \epsilon,$ then 
    \[\mathbb{P}_q\paren{A} > 1-\epsilon\]
    for $q = p + C\log\paren{1/2\epsilon}/\log n\,.$
\end{theorem}

To show that $G_k$ has a sufficiently sharp threshold, we compare it to a similar event on a torus in order to apply the Friedgut-Kalai machinery for symmetric events. Let $\mathbb{T}_m$ be the torus of diameter $m$ obtained by identifying opposite faces of $\Lambda_{m}.$ An enhancement is well-defined in the torus $\mathbb{T}_m$ when its radius is at most $m$. 

\begin{lemma}\label{lemma:symcomp} Let $l \geq 3$ and let $J_{k,l}$ be the event that there is an activated enhancement of $\mathscr{F}_k$ in $\mathbb{T}_{l*2^k}.$ Then
    \[\mathbb{P}\paren{G_k}  \leq \mathbb{P}\paren{J_{k,l}} \leq 1- \paren{1-\mathbb{P}\paren{G_k}}^{l^d}\,.\]
\end{lemma}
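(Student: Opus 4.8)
The plan is to establish the two inequalities separately. For the lower bound $\mathbb{P}(G_k) \leq \mathbb{P}(J_{k,l})$, I would exhibit a coupling that shows $G_k$ implies $J_{k,l}$ up to a measure-preserving identification. On the event $G_k$, there is an activated enhancement of $\mathscr{F}_k$ with anchor in $\Lambda_{2^k}$; since $l \geq 3$, such an enhancement, having radius at most $2^{k+1} \leq \ldots$ — wait, I need to be careful: the enhancement has $\ell^\infty$-diameter at most $2^{k+1}$, hence radius at most $2^{k+1}$, and with anchor in $\Lambda_{2^k}$ it is contained in $\Lambda_{2^k + 2^{k+1}} = \Lambda_{3\cdot 2^k} \subseteq \Lambda_{l\cdot 2^k}$. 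So the enhancement fits inside the fundamental domain $\Lambda_{l\cdot 2^k}$ of $\mathbb{T}_{l\cdot 2^k}$ without wrapping around. The quotient map $\Z^d \to \mathbb{T}_{l\cdot 2^k}$ pushes the Bernoulli percolation configuration on $\Z^d$ to a Bernoulli percolation configuration on the torus of the same parameter, and it sends a $T$-congruent subgraph of $P$ lying in $\Lambda_{3\cdot 2^k}$ to a $T$-congruent subgraph of the torus percolation; here I should remark that the edges of a subgraph contained in $\Lambda_{3\cdot2^k}$ do not get identified with one another, so congruence is preserved. Hence $G_k$ implies $J_{k,l}$ after projection, giving $\mathbb{P}(G_k) \leq \mathbb{P}(J_{k,l})$.

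For the upper bound, the idea is a union bound over translates. Tile $\Lambda_{l\cdot 2^k}$ (equivalently $\mathbb{T}_{l\cdot 2^k}$) by $l^d$ translated copies of $\Lambda_{2^k}$ indexed by $v \in \{0, 2\cdot 2^k, 4\cdot 2^k, \ldots\}^d$ read modulo $l\cdot 2^k$ — more precisely, boxes of the form $v + \Lambda_{2^k}$. Any activated enhancement of $\mathscr{F}_k$ in $\mathbb{T}_{l\cdot 2^k}$ has its anchor in one of these $l^d$ boxes, say $v_0 + \Lambda_{2^k}$. The event that there is an activated enhancement of $\mathscr{F}_k$ with anchor in $v_0 + \Lambda_{2^k}$, lifted back through translation invariance of the torus measure, has the same probability as the event that there is an activated enhancement of $\mathscr{F}_k$ with anchor in $\Lambda_{2^k}$ in $\mathbb{T}_{l\cdot 2^k}$; and by the same locality observation as above (the enhancement plus its anchor box fits in $\Lambda_{3\cdot 2^k} \subseteq \Lambda_{l\cdot 2^k}$ without wrapping), this torus event coincides with the $\Z^d$ event $G_k$. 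So each of the $l^d$ translated events has probability $\mathbb{P}(G_k)$. Writing $J_{k,l}$ as the union of these $l^d$ events and using the inclusion–exclusion / union bound in complementary form, $\mathbb{P}(J_{k,l}^c) \geq \prod (\text{marginals})$ is false in general, so instead I bound $1 - \mathbb{P}(J_{k,l}) = \mathbb{P}(\bigcap J_v^c) \geq \prod_v \mathbb{P}(J_v^c)$ using the FKG inequality, since each $J_v^c$ is a decreasing event (enhancements only get activated by adding open edges). This yields $1 - \mathbb{P}(J_{k,l}) \geq (1 - \mathbb{P}(G_k))^{l^d}$, which rearranges to the claimed bound.

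The main obstacle I anticipate is bookkeeping the locality/non-wrapping argument carefully: one must check that for an enhancement of the relevant scale with anchor in a box of radius $2^k$, neither the graph $S$ nor the witnessing copy of $T$ wraps around the torus $\mathbb{T}_{l\cdot 2^k}$ when $l \geq 3$, so that "activated enhancement in the torus" and "activated enhancement in $\Z^d$" genuinely agree on these configurations — this is exactly where the hypothesis $l \geq 3$ (rather than, say, $l \geq 2$) is used, because the anchor box $\Lambda_{2^k}$ plus an enhancement of radius up to $2^{k+1}$ needs a surrounding region of radius $3 \cdot 2^k$. A secondary point requiring care is that the $l^d$ translated boxes should be taken as a genuine partition of the torus vertex set (so that every anchor lands in exactly one box), which forces the side length $l\cdot 2^k$ to be an integer multiple of $2 \cdot 2^k$ — harmless since we are free to choose the torus size, and in any case the union bound only needs the boxes to cover, not to be disjoint, for the easy direction; the FKG step needs only covering as well. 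Everything else is a routine application of translation invariance of the uniform/product measure on the torus and the FKG inequality.
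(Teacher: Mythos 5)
Your proposal is correct and follows essentially the same route as the paper: the first inequality comes from the locality of $G_k$ (it depends only on edges in a box that embeds in $\mathbb{T}_{l\cdot 2^k}$ for $l\geq 3$), and the second from writing $J_{k,l}$ as a union of $l^d$ translates of $G_k$ and applying FKG to the complementary decreasing events. Your extra bookkeeping about non-wrapping and the role of $l\geq 3$ is exactly the implicit content of the paper's ``immediate'' step, so nothing is missing.
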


\begin{proof}
    The first inequality is immediate, since $G_k$ only depends on states in $\Lambda_{2^{k+1}},$ which embeds into $\mathbb{T}_{l*2^k}.$ To see the second inequality, notice that $J_{k,l}$ can be written as a union of $l^d$ translates of $G_k,$ which we enumerate as $\set{G^{\paren{i}}}_{i=1}^{l^d}.$ Then using FKG, we have
    \[\mathbb{P}\paren{\neg J_{k,l}} = \mathbb{P}\paren{\bigcap_i \neg G^{\paren{i}}} \geq \mathbb{P}\paren{\neg G_k}^{\paren{l^d}}\,,\]
    so
    \[\mathbb{P}\paren{J_{k,l}} \leq 1- \paren{1-\mathbb{P}\paren{G_k}}^{l^d}\,\]
    as desired.
\end{proof}

We now consider the sharpness of the probability of a symmetric event in the torus.

\begin{prop}\label{prop:FKsummable}
    Consider independent Bernoulli bond percolation on the sequence of tori $\set{\T_{2^{k}}}_{k \in \N}.$ For each $k \in \N,$ let $A_k$ be a symmetric increasing event on $\T_{2^{k}},$ and let 
    \[r_c \coloneqq \sup\set{p : \limsup \mathbb{P}_p\paren{A_k} = 0}\,.\]
    Then if $p < r_c,$
    \[\sum_{k=1}^{\infty} \mathbb{P}_p\paren{A_k} < \infty\,,\]
    and if $p > r_c,$
    \[\limsup_{k \to \infty} \mathbb{P}_p\paren{A_k} = 1\,.\]
\end{prop}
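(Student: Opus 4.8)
The plan is to derive both halves of Proposition~\ref{prop:FKsummable} from the Friedgut--Kalai theorem (Theorem~\ref{theorem:FK}), exploiting the fact that the tori $\T_{2^k}$ have $n_k = d \cdot (2^{k+1})^d$ edges, so that $\log n_k$ grows linearly in $k$. The key quantitative input is that a jump in probability of a symmetric increasing event from small to large costs only $O(1/\log n_k) = O(1/k)$ in the parameter $p$, and the harmonic series $\sum 1/k$ diverges — so over infinitely many scales we can accumulate an arbitrarily large total shift in $p$. This lets us convert ``$\limsup \prob_p(A_k)=0$'' at one parameter into ``$\prob_q(A_k)\to 1$'' for any fixed $q$ strictly larger, and dually gives summability strictly below $r_c$.

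For the supercritical half ($p > r_c$): pick $p'$ with $r_c < p' < p$. By definition of $r_c$ as a supremum, $\limsup_k \prob_{p'}(A_k) = \epsilon_0 > 0$, so there is an infinite set of indices $k$ with $\prob_{p'}(A_k) > \epsilon_0/2$. For each such $k$, Theorem~\ref{theorem:FK} gives $\prob_{q_k}(A_k) > 1 - \epsilon_0/2$ where $q_k = p' + C\log(1/\epsilon_0)/\log n_k$. Since $\log n_k \to \infty$, we have $q_k \to p' < p$, so $q_k < p$ for all large $k$ in this infinite set; monotonicity of $\prob_{\cdot}(A_k)$ in the parameter (the $A_k$ are increasing) then yields $\prob_p(A_k) > 1 - \epsilon_0/2$ for infinitely many $k$, hence $\limsup_k \prob_p(A_k) \geq 1 - \epsilon_0/2$. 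To upgrade $1 - \epsilon_0/2$ to exactly $1$, repeat the argument with $\epsilon_0/2$ replaced by $1 - \delta$ for arbitrarily small $\delta > 0$: having shown $\prob_{p'}(A_k) > 1-\delta$ along a subsequence is automatic once $\limsup > 1-\delta$, but more cleanly, one observes that after one application of Friedgut--Kalai the event probabilities are close to $1$ along a subsequence, and since any fixed $q > r_c$ dominates $p' + (\text{vanishing})$, we can take $\epsilon$ in Theorem~\ref{theorem:FK} as small as we like while keeping $q_k < p$ eventually. This drives $\limsup_k \prob_p(A_k) = 1$.

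For the subcritical half ($p < r_c$): fix $p < p' < r_c$, so $\limsup_k \prob_{p'}(A_k) = 0$, i.e.\ $\prob_{p'}(A_k) \to 0$. Suppose for contradiction that $\sum_k \prob_p(A_k) = \infty$; then in particular $\prob_p(A_k)$ does not go to $0$ too fast — more usefully, there is an infinite set of $k$ with $\prob_p(A_k) \geq c_k$ for a non-summable sequence, and in particular (since $\prob_p(A_k) \leq 1$) infinitely many $k$ with, say, $\prob_p(A_k) \geq 1/k$ would already be enough to run Friedgut--Kalai with $\epsilon = \epsilon_k = 1/(2k)$: then $\prob_{q_k}(A_k) > 1 - 1/(2k)$ with $q_k = p + C\log k / \log n_k = p + O(\log k / k) = p + o(1)$. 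Hence $q_k < p' $ for all large $k$ in this set, so by monotonicity $\prob_{p'}(A_k) > 1 - 1/(2k) \to 1$ along an infinite subsequence, contradicting $\prob_{p'}(A_k) \to 0$. The only gap to fill is the elementary observation that if $\sum_k \prob_p(A_k) = \infty$ then infinitely many terms satisfy $\prob_p(A_k) \geq 1/k$ (otherwise the tail is dominated by $\sum 1/k$ restricted to a co-finite set — wait, that diverges too); the correct elementary fact is: if $\prob_p(A_k) < 1/(k\log^2 k)$ for all large $k$ the sum converges, so divergence forces $\prob_p(A_k) \geq 1/(k \log^2 k)$ infinitely often, and $C \log(k\log^2 k)/\log n_k = O((\log k)/k) = o(1)$ still holds, so the argument goes through verbatim. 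This contradiction establishes summability.

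The main obstacle — and the only place requiring care — is the bookkeeping in the subcritical direction: one must choose the threshold sequence $\epsilon_k$ tending to $0$ slowly enough that divergence of $\sum \prob_p(A_k)$ genuinely forces $\prob_p(A_k) \geq \epsilon_k$ infinitely often, yet fast enough (combined with $\log n_k \asymp k$) that the Friedgut--Kalai shift $C\log(1/\epsilon_k)/\log n_k$ still tends to $0$; the choice $\epsilon_k \asymp 1/(k\log^2 k)$ threads this needle since $\log(1/\epsilon_k) = O(\log k)$ while $\log n_k \asymp k$. Everything else is a direct invocation of Theorem~\ref{theorem:FK} together with monotonicity of increasing events in the parameter.
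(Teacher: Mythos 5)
Your proof is correct, but it reaches the summability statement by a genuinely different route than the paper. For the half $p<r_c$, the paper does not use Theorem~\ref{theorem:FK} as a black box: it goes inside the Friedgut--Kalai proof, extracts the differential inequality $\partial_p\mathbb{P}_p\paren{A_k}\ge c_3\,\mathbb{P}_p\paren{A_k}\log\paren{2^{k+1}}$ valid below the $1/2$-threshold $r_k$, and integrates it to get the explicit bound $\mathbb{P}_p\paren{A_k}\le 2^{c_4\paren{k+1}\paren{p-r_k}-1}$, i.e.\ geometric decay in $k$ whenever $p<\liminf_k r_k$ (and $r_c\le\liminf_k r_k$). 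You instead keep the theorem as stated and argue by contradiction with a threshold sequence: if the sum diverged, infinitely many $k$ would satisfy $\mathbb{P}_p\paren{A_k}\ge 1/\paren{k\log^2 k}$, and since $\log\paren{k\log^2 k}/\log n_k\to 0$ (using $\log n_k\asymp k$), Friedgut--Kalai plus monotonicity would force $\mathbb{P}_{p'}\paren{A_k}>1/2$ infinitely often for a fixed $p'\in\paren{p,r_c}$, contradicting $\mathbb{P}_{p'}\paren{A_k}\to 0$; equivalently, your argument shows $\mathbb{P}_p\paren{A_k}<1/\paren{k\log^2 k}$ eventually. The trade-off is clear: the paper's route yields exponentially small probabilities (a stronger quantitative conclusion, at the cost of invoking the internals of the Friedgut--Kalai proof and the universality of its constant), while yours is self-contained given only the stated theorem but delivers just barely-summable decay --- which is all the proposition asks for. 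Your treatment of the case $p>r_c$ (boost along a subsequence with a vanishing parameter shift, then let $\epsilon\to 0$) is exactly the ``straightforward consequence'' the paper has in mind. Two cosmetic points: the remark about accumulating shifts via the divergence of the harmonic series plays no role in your final argument and could be dropped, and the self-correcting aside about $\sum 1/k$ (as well as the first, muddled formulation of the upgrade-to-$1$ step) should be cleaned up, since in each case only your second formulation is the valid one.
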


\begin{proof}
The second claim is a straightforward consequence of Theorem~\ref{theorem:FK}. We now turn to the first claim. For convenience, let $r_k$ satisfy $\mathbb{P}_{r_k}\paren{A_k} = 1/2$ for each $k.$ Recall that the proof~\cite{friedgut1996every} of Theorem~\ref{theorem:FK} involves the following inequality:
$$\frac{\partial\mathbb{P}_{p}\paren{A_k}}{\partial p}\geq c_3 \mathbb{P}_{p}\paren{A_k} \log\paren{2^{k+1}}\,,$$
which holds for all $p\leq r_k,$ where $c_3$ is a universal constant that does not depend on the family of events $A_k.$  
Therefore, if $f_k\paren{q}$ satisfies
$$\frac{\partial f_k\paren{p}}{\partial p}= c_3 f_k\paren{p} \log\paren{2^{k+1}}, \quad f_k\paren{r_k}=1/2,$$
then $\mathbb{P}_{p}\paren{A_k}\leq f_k\paren{p}$ for $p\leq r_k.$ Set $c_4=c_3 d.$ We have that 
$$f_k\paren{p}=A e^{c_4 \paren{k+1} \, p \log 2}= 2^{c_4 \paren{k+1} \paren{p-r_k}-1}$$
which is a convergent geometric series for $p<\liminf_k r_k.$ Thus, since $r_c \leq \liminf_k r_k,$ if $p<r_c$ we have $\sum_{k=1}^\infty \mathbb{P}_{p}\paren{A_k} < \infty.$
\end{proof}

\begin{corollary}\label{cor:summable}
    For any $\epsilon > 0,$ 
    \[\sum_{k=1}^\infty \mathbb{P}_{q_c-\epsilon}\paren{G_k} < \infty\]
    and
    \[\limsup_{k \to \infty} \mathbb{P}_{q_c + \epsilon}\paren{G_k} = 1\,.\]
\end{corollary}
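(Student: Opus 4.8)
The plan is to apply Proposition~\ref{prop:FKsummable} to a suitable family of symmetric increasing events on the tori $\T_{2^k}$, and then transfer the conclusion back to the events $G_k$ via Lemma~\ref{lemma:symcomp}. Fix $l \geq 3$ once and for all, and set $A_k \coloneqq J_{k,l}$, the event that there is an activated enhancement of $\mathscr{F}_k$ in $\T_{l*2^k}$. After reindexing (replacing $l*2^k$ by a power of $2$, or simply noting that the torus $\T_{l*2^k}$ is itself a torus whose diameter is $l*2^k$ and that Proposition~\ref{prop:FKsummable} only used that the number of coordinates is $\Theta(d (l*2^k)^d)$, so $\log n = \Theta(k)$), each $A_k$ is symmetric under the torus automorphism group and increasing in the percolation configuration, since adding edges can only create more copies of the template graphs $T_\alpha$. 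Let $r_c$ be the critical probability for this family as in Proposition~\ref{prop:FKsummable}.

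The key step is to identify $r_c$ with $q_c$. By Lemma~\ref{lemma:symcomp}, $\mathbb{P}_p(G_k) \leq \mathbb{P}_p(J_{k,l}) \leq 1 - (1-\mathbb{P}_p(G_k))^{l^d}$. The right-hand inequality shows that if $\limsup_k \mathbb{P}_p(G_k) = 0$ then $\limsup_k \mathbb{P}_p(J_{k,l}) = 0$, so $q_c \leq r_c$; the left-hand inequality shows the reverse, so $q_c = r_c$. Now for $\epsilon > 0$: applying the first conclusion of Proposition~\ref{prop:FKsummable} at $p = q_c - \epsilon < r_c$ gives $\sum_k \mathbb{P}_{q_c-\epsilon}(J_{k,l}) < \infty$, and since $\mathbb{P}_{q_c-\epsilon}(G_k) \leq \mathbb{P}_{q_c-\epsilon}(J_{k,l})$ by the first inequality of Lemma~\ref{lemma:symcomp}, the first assertion of the corollary follows. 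For the second, apply the second conclusion of Proposition~\ref{prop:FKsummable} at $p = q_c + \epsilon > r_c$ to get $\limsup_k \mathbb{P}_{q_c+\epsilon}(J_{k,l}) = 1$; then the inequality $\mathbb{P}_p(J_{k,l}) \leq 1 - (1-\mathbb{P}_p(G_k))^{l^d}$ forces $\limsup_k \mathbb{P}_{q_c+\epsilon}(G_k) = 1$ as well, since $\mathbb{P}(J_{k,l}) \to 1$ along a subsequence implies $(1-\mathbb{P}(G_k))^{l^d} \to 0$ along that subsequence, i.e.\ $\mathbb{P}(G_k) \to 1$ there.

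I expect the only real subtlety to be bookkeeping: Proposition~\ref{prop:FKsummable} is phrased for the tori $\T_{2^k}$, whereas $J_{k,l}$ lives on $\T_{l*2^k}$, and an activated enhancement of $\mathscr{F}_k$ has $\ell^\infty$-diameter up to $2^{k+1}$, which is well-defined on this torus precisely because its radius is at most $2^{k+1} \leq (l/2)(2^{k+1}) \leq l*2^k$ for $l \geq 3$ (so enhancements do not "wrap around"). One should check that the reindexing of tori does not affect anything: the proof of Proposition~\ref{prop:FKsummable} only used $\log n = \Theta(k)$ where $n$ is the number of edges, and $\log(\#\text{edges of } \T_{l*2^k}) = \Theta(k)$ with the implied constants depending only on $l$ and $d$. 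Since $c_3$ in that proof is universal and the constant $c_4$ absorbs the dimension, replacing $\log(2^{k+1})$ by $\log(\#\text{edges of }\T_{l*2^k}) = \Theta(k)$ changes only the base of the resulting geometric series, which still converges for $p < \liminf_k r_k$, and we still have $r_c \leq \liminf_k r_k$ by the same argument. No genuine obstacle remains.
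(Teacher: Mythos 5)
Your proposal is correct and follows essentially the same route as the paper: apply Proposition~\ref{prop:FKsummable} to the symmetric torus events $J_{k,l}$ and transfer the conclusions back to $G_k$ via the two inequalities of Lemma~\ref{lemma:symcomp}, which identify the two thresholds. The only difference is that you spell out the bookkeeping (the identification $r_c=q_c$ and the harmless reindexing from $\T_{2^k}$ to $\T_{l*2^k}$) that the paper's one-line proof leaves implicit.
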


\begin{proof}
    By Lemma~\ref{lemma:symcomp}, $G_k$ has a sharp threshold if and only if $T_{k,3}$ does, so Proposition~\ref{prop:FKsummable} gives the desired bounds.
\end{proof}

\subsection{Continuity below $q_c$}

We are now ready to work towards proving Theorem~\ref{theorem:qccontinuity}.

First, we consider the case $p < q_c.$ 
Since the probabilities of the events $G_k$ are summable in this regime by Corollary~\ref{cor:summable}, there will not be percolation in $P_{\infty}$ unless $P_{k}$ contains paths of length $2^{k}$ starting from the origin with constant probability for arbitrarily large $k.$ We will then show that these paths are enough to percolate in the plane when $k$ is large enough.

\begin{lemma}\label{lemma:onearm}
    Let $\mathbb{P}$ be the law of a translation invariant positively associated subgraph of $\Z^2.$ Then for any $j \in \N,$
    \[\mathbb{P}\paren{H\paren{\Lambda_{jk,2k}}} \geq \paren{\frac{1}{8}\mathbb{P}\paren{0 \xleftrightarrow[]{} \partial \Lambda_k}}^{2j}\,.\]
\end{lemma}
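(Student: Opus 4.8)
\begin{sproof}[Proposal]
The plan is to first convert the one-arm event into a crossing of a small rectangle, then chain together $O(j)$ such crossings along the long direction of $\Lambda_{jk,2k}$ and multiply their probabilities using FKG. Write $\rho \coloneqq \mathbb{P}\paren{0 \xleftrightarrow{} \partial \Lambda_k}$. The event $\set{0 \xleftrightarrow{} \partial \Lambda_k}$ is contained in the union of the four events $\set{0 \xleftrightarrow{} F}$, where $F$ ranges over the faces of $\Lambda_k$; applying the square-root trick (i.e.\ FKG applied to the complementary decreasing events) together with the symmetries of the lattice, under which these face-reach events are permuted, each of them has probability at least $1 - \paren{1-\rho}^{1/4} \geq \rho/4$. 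A path realizing (say) the reach to the right face of $\Lambda_k$ contains a horizontal crossing of the $k \times 2k$ domino $\brac{0,k}\times\brac{-k,k}$, so by translation invariance $\mathbb{P}\paren{H\paren{R}} \geq \rho/4$ for every translate $R$ of this domino, and, rotating by ninety degrees, $\mathbb{P}\paren{V\paren{R'}} \geq \rho/4$ for every translate $R'$ of the $2k \times k$ domino.

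Next I would build a left--right crossing of $\Lambda_{jk,2k}$ out of these domino crossings by a standard Russo--Seymour--Welsh--type gluing: horizontal crossings of vertical $k\times 2k$ dominoes laid end to end to make leftward progress across the width $2jk$, together with vertical crossings of $2k\times k$ dominoes inserted at the junctions between consecutive vertical dominoes so as to sew the progress crossings into a single open path. The height of $\Lambda_{jk,2k}$ is $4k$, twice the height occupied by one progress domino, which is precisely the room needed to fit the transversal sewing dominoes. One then checks that the intersection of all of these crossing events is contained in $H\paren{\Lambda_{jk,2k}}$, and that at most $2j$ of them are used in total (on the order of $j$ progress dominoes and $j$ sewing dominoes). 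Each is an increasing event of probability at least $\rho/8$, so positive association yields
\[\mathbb{P}\paren{H\paren{\Lambda_{jk,2k}}} \geq \paren{\rho/8}^{2j} = \paren{\tfrac{1}{8}\,\mathbb{P}\paren{0 \xleftrightarrow{} \partial \Lambda_k}}^{2j}\,.\]

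The hard part is the gluing step. Horizontal crossings of two adjacent vertical dominoes need not meet --- they may traverse the shared region at different heights --- so the transversal dominoes, and a short topological argument showing that the progress crossings, the sewing crossings, and their overlaps all lie in a common cluster, are genuinely needed rather than cosmetic; one must also keep the bookkeeping tight enough that only $2j$ domino crossings appear so that the constant that falls out is exactly $\tfrac18$. By comparison, the reduction in the first step and the concluding FKG application are routine, the only delicate point there being that passing from the one-arm event to a crossing of a fixed domino uses a reflection symmetry of the lattice to equate the four face-reach events.
\end{sproof}
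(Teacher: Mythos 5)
Your opening reduction is fine: by the square-root trick and lattice symmetry (which, as in the paper's own sketch, has to be used beyond the literally stated translation invariance), a path from $0$ to the right face of $\Lambda_k$ does yield a horizontal crossing of the domino $\brac{0,k}\times\brac{-k,k}$ with probability at least $\rho/4$. The genuine gap is the gluing step, which you correctly flag as the hard part but then assert can be carried out: it cannot, with the pieces you have. The planar gluing fact forces a horizontal crossing of a rectangle $R$ to meet a vertical crossing of a rectangle $S$ only when the horizontal extent of $S$ is contained in that of $R$ and the vertical extent of $R$ is contained in that of $S$. Your sewing dominoes are $2k\times k$ rectangles straddling the junction of two $k\times 2k$ progress dominoes, so neither containment holds: the vertical crossing of a sewing domino may lie entirely on one side of the junction and never enter the neighboring progress domino, and even where it does enter, it spans only a $k$-tall band of a $2k$-tall domino, which the horizontal crossing of that domino can pass above or below. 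Indeed one can place pairwise disjoint segments, one inside each domino, realizing every one of your crossing events while producing no crossing of $\Lambda_{jk,2k}$, so the intersection of your events is not contained in $H\paren{\Lambda_{jk,2k}}$. To force intersections the transversal pieces would have to be vertical crossings of rectangles at least $2k$ tall and at most $k$ wide, i.e.\ long-way crossings, and those are exactly what the one-arm event does not supply (passing from short-way to long-way crossings is the content of Theorem~\ref{thm:shortlongcrossing}, which the paper invokes for this purpose only in the $p>q_c$ argument).

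There is also a bookkeeping problem even granting the gluing: spanning the width $2jk$ with $k$-wide progress dominoes needs $2j$ progress crossings plus roughly $2j$ sewing crossings, so your route would give an exponent near $4j$, and $\paren{\rho/4}^{4j}$ is weaker than the claimed $\paren{\rho/8}^{2j}$ for small $\rho$. Note that the paper's sketch takes a different route which avoids unanchored dominoes altogether: it chains arm events that stay anchored, each asserting that the current anchor is joined inside its own box $\Lambda_k$ to a chosen half of the right side of that box, with the upper or lower half chosen so that the chain stays inside the prescribed rectangle; the $1/8$ is the price of specifying one of the eight half-sides and the $2j$ counts the steps of length $k$ across the width $2jk$. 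To repair your argument you would need either to exploit such anchoring or to input a short-to-long crossing theorem; aspect-ratio-two domino crossings alone do not glue.
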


\begin{proof}
    This follows from the one-arm estimate by exploring to one half of the right side of a square, choosing the upper or lower part as necessary to remain in the prescribed rectangle.
\end{proof}

Since we will consider multiple percolation sets simultaneously in the following lemma, we introduce a modification of the box crossing notation in order to avoid confusion. Define $H\paren{R,P}$ to be the event that there is a horizontal crossing of $R$ in the random subgraph $P.$

\begin{lemma}\label{lemma:torussharp}
    Suppose that there is an $\epsilon>0$ and a $p \in \brac{0,1}$ so that for every $k,$ 
    \[\mathbb{P}_p\paren{H\paren{\Lambda_{7*2^k,2^k},P_k}} > \epsilon\,.\]
    Then for any $p' > p,$ there is an $M = M\paren{p'}$ so that for $k>M,$
    \[\mathbb{P}_{p'}\paren{H\paren{\Lambda_{6*2^k,2*2^k},P_k}} > 1- \epsilon\,.\]
\end{lemma}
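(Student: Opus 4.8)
The plan is to run a sharp-threshold argument: compare crossings of $P_k$ to a symmetric event on a torus, boost its probability with the Friedgut--Kalai theorem, and transfer back to $\Z^2$. Write $n=2^k$. Two facts about $P_k$ will be used throughout. First, $P_k$ is a monotone function of the underlying i.i.d.\ bond configuration, since adding an open bond can only create more congruent copies of the patterns $T_\alpha$, hence only activates more enhancements; consequently every crossing event of $P_k$ is an increasing event on the Hamming cube. Second, the law of $P_k$ is invariant under translations and under the order-$4$ rotation of $\Z^2$ (which lies in $SO(2)$), and—since every enhancement active in $P_k$ has $S_\alpha$ of diameter $O(2^k)$ and $T_\alpha\subset S_\alpha$—the restriction of $P_k$ to a region $R$ is determined by the configuration in an $O(2^k)$-collar of $R$; thus the enhanced percolations on $\Z^2$ and on a torus $\T_m$ agree, under the obvious coupling, on any region that (with its collar) embeds in $\T_m$.

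The first step is to manufacture, on a square torus $\T_m$ with $m$ a suitable fixed multiple of $2^k$, a symmetric increasing event whose probability at $p$ is bounded below uniformly in $k$. Arrange finitely many congruent copies of $\Lambda_{7n,n}$ around a horizontal ``annulus'' of height $2n$ in $\T_m$, each overlapping the next in a $2n\times 2n$ square, the copies and the overlap squares all lying in a single strip of height $2n$, with the number of copies chosen so that they tile the annulus exactly. By the hypothesis and the $\Z^2$–torus comparison above, each copy is crossed horizontally in $P_k$ with probability $>\epsilon$; and since a horizontal crossing of $\Lambda_{7n,n}$ restricts to one of any width-$2n$ sub-window of the same height, $\mathbb{P}_p(H(\Lambda_{n,n},P_k))>\epsilon$, so each overlap square is crossed vertically with probability $>\epsilon$ by the rotation symmetry. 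These events are increasing, so by FKG they occur simultaneously with probability $>\epsilon^{c_0}$ for a constant $c_0$; when they do, gluing the horizontal crossings through the vertical crossings of the overlap squares yields an open non-contractible $P_k$-circuit inside a horizontal strip of height $2n$. Let $A_k$ be the event that $P_k$ contains a non-contractible circuit lying in some translate of a horizontal strip of height $2n$ or of a vertical strip of width $2n$. Then $\mathbb{P}_p(A_k)>\epsilon^{c_0}$, $A_k$ is increasing, and $A_k$ is invariant under the group generated by translations and the order-$4$ rotation—which acts transitively on the edges of $\T_m$—so $A_k$ meets the hypotheses of Theorem~\ref{theorem:FK}.

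Next, since $\T_m$ has $\Theta(2^{2k})$ edges, Theorem~\ref{theorem:FK} upgrades $\mathbb{P}_p(A_k)>\epsilon^{c_0}$ to $\mathbb{P}_q(A_k)>1-\epsilon^{c_0}$ once $q$ exceeds $p$ by an amount tending to $0$ as $k\to\infty$; hence for each fixed $p'>p$ there is $M=M(p')$ with $\mathbb{P}_{p'}(A_k)>1-\epsilon^{c_0}$ for all $k>M$. It remains to extract from this near-certain event an open horizontal crossing of $\Lambda_{6n,2n}$ on $\Z^2$. I would split $A_k$ into its horizontal- and vertical-strip versions (increasing, hence positively correlated, and equally likely by the rotation symmetry) and apply the square-root trick to pass to one of them; then, covering $\T_m$ by a controlled number of height-$2n$ strips and using FKG once more, pin the circuit to a prescribed strip lying inside $\Lambda_{6n,2n}$; finally unwrap the circuit—being a simple non-contractible cycle in an annulus it has winding number $\pm1$, so a first-hitting-time argument produces an open horizontal $P_k$-crossing of a $12n\times 2n$ sub-rectangle of $\Lambda_{6n,2n}$ (hence of $\Lambda_{6n,2n}$) that uses no edge across the seam. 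By the $\Z^2$–torus comparison this gives $\mathbb{P}_{p'}(H(\Lambda_{6n,2n},P_k))>1-\epsilon$ on $\Z^2$ for $k>M$, as claimed.

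I expect the real difficulty to be precisely this last transfer: one must choose the torus scale, the overlap structure in the gluing, and the intermediate strips so that, after the square-root trick and the various FKG reductions, the unwrapped crossing lands in a box of exactly the prescribed shape while the probability is still above $1-\epsilon$—this is what forces the specific relation between $\Lambda_{7\cdot2^k,2^k}$ and $\Lambda_{6\cdot2^k,2\cdot2^k}$ and the restriction to $k>M$—and one must make the $\Z^2$-versus-torus comparison fully rigorous, the crucial point being that an active enhancement of $P_k$ is supported in a box of side a fixed multiple of $2^k$, so on an embedded region the two models coincide under the natural coupling. Everything else—FKG, the $SO(2)$-symmetry of the model, the planar topology of the gluing and unwrapping, and the single application of Friedgut--Kalai—is routine.
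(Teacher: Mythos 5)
Your argument is essentially the paper's own proof: the paper simply cites Lemma 9 of Bollob\'as--Riordan \cite{bollobas2006short}, and what you reconstruct (monotonicity and $O(2^k)$-range locality of $P_k$, the torus comparison, FKG-gluing of $\Lambda_{7\cdot 2^k,2^k}$ crossings into a non-contractible circuit, the Friedgut--Kalai boost for the symmetric increasing event, and the square-root-trick extraction of a planar crossing) is exactly that argument adapted to the enhanced model. Two small touch-ups when writing it out: boost with Friedgut--Kalai to $1-\delta$ for a $\delta$ chosen small enough that the losses from the square-root tricks still leave $1-\epsilon$ (rather than fixing $1-\epsilon^{c_0}$), and pin the circuit using covering strips of height $4\cdot 2^k$ at offsets that are multiples of $2\cdot 2^k$ (a circuit in an arbitrary height-$2\cdot 2^k$ strip can straddle two of your height-$2\cdot 2^k$ covering strips), which still lands the unwrapped crossing in a translate of $\Lambda_{6\cdot 2^k,2\cdot 2^k}$.
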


\begin{proof}
    The proof is essentially the same as the proof of Lemma 9 of~\cite{bollobas2006short}.
\end{proof}

Recall that a site percolation in $\Z^d$ is called $k$-dependent if the state of a vertex $v$ is independent of the states outside of $\Lambda_k\paren{v},$ the cube of radius $k$ centered at $v.$ We use the following standard tool for comparing $k$-dependent percolations to independent ones.
\begin{theorem}[Liggett, Schonmann, Stacey]
    For any $d,k \in \N, \epsilon > 0$ there is a $\delta > 0$ so that if $X$ is a $k$-dependent site percolation on $\Z^d$ that satisfies
    \[\mathbb{P}\paren{v \in X \mid X \cap \paren{\Lambda_k\paren{v} \setminus v} = \omega} > 1-\delta\]
    for any $v \in \Z^d, \omega \subset \Z^d,$ then $X$ stochastically dominates a Bernoulli site percolation with parameter $1-\epsilon.$
\end{theorem}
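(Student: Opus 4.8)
\begin{sproof}
I would prove this following Liggett, Schonmann and Stacey: the goal is an explicit monotone coupling of $X$ with an i.i.d.\ Bernoulli($1-\epsilon$) site percolation $Y$ in which $Y_v \le X_v$ for every $v$ almost surely. Such a coupling gives the conclusion immediately, since any increasing event satisfied by $Y$ is then satisfied by $X$. To build it, fix an enumeration $v_1, v_2, \dots$ of $\Z^d$ and i.i.d.\ uniform variables $U_1, U_2, \dots$ on $[0,1]$, and reveal the sites of $X$ in this order: having revealed $X_{v_1}, \dots, X_{v_{n-1}}$, set $p_n = \mathbb{P}(v_n \in X \mid X_{v_1}, \dots, X_{v_{n-1}})$ and define $X_{v_n} = \mathbbm{1}[U_n \le p_n]$ and $Y_{v_n} = \mathbbm{1}[U_n \le 1-\epsilon]$. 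This reproduces the law of $X$, makes $Y$ an i.i.d.\ Bernoulli($1-\epsilon$) field, and yields $Y_{v_n} \le X_{v_n}$ provided $p_n \ge 1-\epsilon$ almost surely for every $n$.

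Everything thus reduces to the bound $p_n \ge 1-\epsilon$, and here $k$-dependence and the conditional hypothesis enter. Consider the dependency graph $\mathcal{G}$ on $\Z^d$, with $v \sim w$ iff $0 < \norm{v-w}_\infty \le k$; it has bounded degree $b = (2k+1)^d - 1$, and $X$ restricted to any set of pairwise-nonadjacent vertices is independent. To estimate $p_n$ one proves, by an induction over $\mathcal{G}$ in the style of the Lov\'asz Local Lemma, a uniform bound of the form $\mathbb{P}(v \notin X \mid X_w,\, w \in F) \le \gamma$ valid for every vertex $v$ and every finite $F \not\ni v$ --- peeling conditioned coordinates off the estimate one at a time, using the uniform-in-$\omega$ hypothesis $\mathbb{P}(v \in X \mid X \cap (\Lambda_k(v)\setminus v) = \omega) > 1-\delta$ to absorb the neighbors of $v$ and finite-range independence to absorb the non-neighbors, with vertex weights chosen so the recursion closes. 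This works as long as $\delta$ lies below a threshold of order $1/b$ --- the sharp value being $(b-1)^{b-1}/b^b$ --- and then delivers $\epsilon = \epsilon(d,k,\delta)$ with $\epsilon \to 0$ as $\delta \to 0$, which is the dependence the statement requires.

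The induction of the second paragraph is the technical heart, and the real obstacle. The delicate point is the treatment of the conditioned non-neighbor coordinates: finite-range dependence is a statement about \emph{unconditional} independence, not conditional independence given the intervening sites, so conditioning on a faraway revealed site can a priori still perturb the law of $X_{v_n}$, and keeping this under control as the conditioning set grows is precisely the content of the Liggett--Schonmann--Stacey estimate. A routine preliminary simplification is to coarse-grain --- partition $\Z^d$ into cubes of side larger than $k$ and pass to the field that declares a cube open iff all of its sites are open, which is $1$-dependent and, by a union bound, still has site-marginals within $O(k^d \delta)$ of $1$ --- reducing to $b = 3^d - 1$; but this is cosmetic rather than substantive. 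Combining the coupling with the conditional estimate produces the desired $\delta = \delta(d,k,\epsilon)$.
\end{sproof}
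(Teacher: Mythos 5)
This statement is quoted in the paper as a known result of Liggett, Schonmann and Stacey, with only a citation to their 1997 paper and no proof given, so there is no internal argument to compare against. Your sketch correctly reproduces the standard LSS proof --- the sequential monotone coupling reducing everything to a uniform lower bound on $\mathbb{P}\paren{v \in X \mid X_w,\, w \in F}$, established by the inductive peeling argument over the bounded-degree dependency graph --- and it rightly identifies the conditional-versus-unconditional independence issue as the technical heart.
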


When we apply the Lemma~\ref{lemma:torussharp} in the next proof note that the dependence of enhancement activation events is controlled by the bound on the diameter of the enhanced set and the requirement that the enhanced set contains the activation set. 

\begin{proof}[Proof of Theorem~\ref{theorem:qccontinuity} in the case $p < q_c$]
    Suppose that $P_{\infty}$ percolates. By Corollary~\ref{cor:summable}, we have 
    \[\mathbb{P}_p\paren{\bigcup_{k=N}^{\infty} G_k} \xrightarrow[]{N \to \infty} 0\,.\]
    Combining this with the bound
    \begin{align*}
        \mathbb{P}_p\paren{0 \xleftrightarrow[P_{\infty}]{} \infty} &\leq \mathbb{P}_p\paren{0 \xleftrightarrow[P_{N-1}]{} \partial \Lambda_{2^{N}} \cup \bigcup_{k=N}^\infty G_k}\\ &\leq \mathbb{P}_p\paren{0 \xleftrightarrow[P_{N-1}]{} \partial \Lambda_{2^N}} + \mathbb{P}_p\paren{\bigcup_{k=N}^{\infty} G_k}\,,
    \end{align*}
    there must exist an $\epsilon > 0$ and an $N \in \N$ so that for any $k > N,$ 
    \[\mathbb{P}_p\paren{0 \xleftrightarrow[P_{k-1}]{} \partial \Lambda_{2^{k}}} > \epsilon\,.\]

    Now let $p' > p$ and set $0 < \delta < \epsilon$ to be determined later. By Lemmas~\ref{lemma:onearm} and~\ref{lemma:torussharp}, for sufficiently large $k,$ we have that
    \begin{equation}\label{eq:boxcrossing}
    \mathbb{P}_{p'}\paren{H\paren{\Lambda_{6*2^{k},2*2^{k}},P_{k-1}}} > 1- \delta\,.
    \end{equation}
    Now consider a renormalized site system in which the sites are elements of a tiling by translates of $\Lambda_{2*2^{k}}.$ Given such a site $\mathbb{S},$ let $R_1^{\mathbb{S}}$ and $R_2^{\mathbb{S}}$ respectively be the translates of $\Lambda_{6*2^{k},2*2^{k}}$ and $\Lambda_{2*2^{k},6*2^{k}}$ centered at the center point of $\mathbb{S}.$ 
    Then we say that $\mathbb{S}$ is open if $H\paren{R_1^{\mathbb{S}}, P_{k-1}}$ and $V\paren{R_2^{\mathbb{S}},P_{k-1}}$ both occur. Then by positive association and the previous estimate we have
    \[\mathbb{P}_{p'}\paren{\text{$\mathbb{S}$ is open}} \geq \paren{1-\delta}^2\,,\]
    and if two renormalized vertices are connected by a path of open sites, they are connected by a path in $P_{k-1}.$ Notice that the renormalized system is $2$-dependent. Thus, by~\cite{liggett1997domination}, it is supercritical for sufficiently small $\delta.$ We have therefore shown that $P_{k-1}$ percolates.
\end{proof}

\subsection{Continuity above $q_c$}

We now consider the case of $p>q_c,$ where we will prove that percolation always occurs with finitely many enhancements. First, we show that activated connected enhancements produce box crossings in the plane.

\begin{lemma}\label{lemma:occupancycrossing}
    Let $\mathscr{E}$ be a family of enhancements that satisfies the hypotheses of Theorem~\ref{theorem:z2continuity}, each with radius at least $n.$ Let $L_n$ be the event that there is an activated enhancement of $\mathscr{E}$ anchored in $\Lambda_{n/2}.$ Then we have
    \[\mathbb{P}\paren{H\paren{\Lambda_{n/4,n}}} \geq \frac{1}{4}\mathbb{P}\paren{L_n}\,.\]
\end{lemma}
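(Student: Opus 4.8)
The plan is to show that an activated enhancement anchored in $\Lambda_{n/2}$, being a connected subgraph of the nearest-neighbor graph $\Z^2$ of $\ell^\infty$-diameter at least $n$ (since its radius is at least $n$), must cross one of a bounded number of rectangles, and then invoke a union bound together with translation invariance and symmetry of the model.

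First I would fix notation: suppose the event $L_n$ occurs, so there is an activated enhancement $(\gamma(T),\gamma(S))$ with anchor in $\Lambda_{n/2}$ and radius at least $n$. The enhanced set $\gamma(S)$ is a connected set of nearest-neighbor edges; because its $\ell^\infty$-diameter is at least $n$ while it is within $\ell^\infty$-distance, say, roughly $n/2$ of the anchor in at least one coordinate, the connected set $\gamma(S)$ must contain two vertices whose separation in one of the two coordinate directions is at least (a constant times) $n$ — concretely, it reaches distance at least $n/4$ from the anchor in the positive or negative $\vec e_1$ or $\vec e_2$ direction. A connected subgraph of $\Z^2$ joining two points that differ by at least $m$ in (say) the first coordinate contains a horizontal crossing of a rectangle of width $m$ and some height; by choosing the rectangle tall enough (height $n$, say, which is larger than the reach in the orthogonal direction given the radius/anchor relationship) the crossing stays inside a translate of $\Lambda_{n/4,n}$ or its $90^\circ$ rotation $\Lambda_{n,n/4}$.

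Next I would organize the case analysis: the "long" direction of $\gamma(S)$ is one of the four signed axis directions, and for each the relevant crossing occurs in a translate of $\Lambda_{n/4,n}$ (for the two vertical-long cases, i.e. $\gamma(S)$ stretching in the $\vec e_1$ direction this gives a horizontal crossing of a wide-ish box; I need to be a little careful about which of $\Lambda_{n/4,n}$ vs $\Lambda_{n,n/4}$ appears, but by symmetry of $\Z^2$ under the $90^\circ$ rotation the probabilities agree). Since the anchor lies in $\Lambda_{n/2}$, all of these crossing-rectangles lie within a bounded region, and — more importantly — each can be taken to be a translate of a fixed box by a vector depending on the configuration; by translation invariance of $\mathbb{P}$ each such crossing event has probability equal to $\mathbb{P}(H(\Lambda_{n/4,n}))$ (up to the rotation, which by the rotational symmetry built into the model has the same probability). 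Thus $L_n$ is contained in the union of at most $4$ crossing events each of probability $\mathbb{P}(H(\Lambda_{n/4,n}))$ (after accounting for rotation), and a union bound gives $\mathbb{P}(L_n)\le 4\,\mathbb{P}(H(\Lambda_{n/4,n}))$, which is the claim.

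The main obstacle is the geometric bookkeeping in the first two steps: verifying that "connected set of $\ell^\infty$-diameter $\ge n$ with a point near the origin'' genuinely forces a crossing of a rectangle with the stated aspect ratio that also \emph{fits inside} a translate of $\Lambda_{n/4,n}$ — i.e. controlling the extent of the relevant piece of $\gamma(S)$ in the short direction so the box is only $n/4$ wide, and making sure the constant $4$ (rather than something larger) suffices. This amounts to extracting, from the connected set, a monotone-in-one-coordinate sub-path or a path confined to a slab; the cleanest route is to take the component of $\gamma(S)$ and walk from the anchor to a far vertex, then truncate the walk at the first time it achieves coordinate-displacement $n/4$, noting this truncated path stays within $\ell^\infty$-distance $n/4 + 1$ of the anchor and hence inside a suitable translate of $\Lambda_{n/4,n}$ once we center appropriately. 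I expect this to go through with the stated constant, perhaps with a harmless adjustment absorbed into the remark that the definition of $G_k$ (and the rectangles here) is insensitive to constant factors.
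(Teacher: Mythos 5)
The core counting step in your argument does not work as stated. You let the crossed rectangle be ``a translate of a fixed box by a vector depending on the configuration'' (essentially centered at the anchor) and then claim that, by translation invariance, each of your four direction-cases is a single event of probability $\mathbb{P}\paren{H\paren{\Lambda_{n/4,n}}}$. Translation invariance only says that each \emph{fixed} translate of $\Lambda_{n/4,n}$ is crossed the short way with that probability; the event ``some translate with offset ranging over the $\sim n^2$ possible anchor positions in $\Lambda_{n/2}$ is crossed'' is a union of order $n^2$ such events, and a union bound over it costs a factor of order $n^2$, not $4$. That loss is fatal for the way the lemma is used: Proposition~\ref{prop:supercrossing} needs a lower bound on the crossing probability that is a fixed function of $\mathbb{P}\paren{L_n}$, uniform in $n$, before feeding it into Theorem~\ref{thm:shortlongcrossing} and Lemma~\ref{lemma:torussharp}; a bound of the form $\mathbb{P}\paren{L_n}/(Cn^2)$ tends to $0$ and gives nothing. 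There is also a secondary geometric slip: truncating the walk at the first time its displacement in the chosen coordinate reaches $n/4$ controls only that coordinate, so the truncated path need not stay within $\ell^\infty$-distance $n/4+1$ of the anchor, and in any case a path whose displacement is $n/4$ does not cross a box of short side $n/2$ such as $\Lambda_{n/4,n}$.

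The way to make the union bound legitimate with only four events is to use \emph{deterministic} rectangles that every such enhancement must cross regardless of where in $\Lambda_{n/2}$ its anchor sits: since the enhanced set is a connected subgraph containing its anchor in $\Lambda_{n/2}$ and reaching $\ell^\infty$-distance well beyond $n$ from the origin (this is where the radius hypothesis enters; the paper's proof in fact works with radius of order $3n/2$, the discrepancy with $n$ being absorbed by the constant-factor flexibility in the definition of $G_k$), it contains a path from $\partial\Lambda_{n/2}$ to $\partial\Lambda_n$. The annulus $\Lambda_n\setminus\Lambda_{n/2}$ is the union of four fixed rotated/translated copies of $\Lambda_{n/4,n}$, and any path crossing the annulus crosses one of these four fixed rectangles in its short direction. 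A union bound over these four fixed events, together with the rotational symmetry you already invoke, gives $\mathbb{P}\paren{L_n}\leq 4\,\mathbb{P}\paren{H\paren{\Lambda_{n/4,n}}}$ with the constant $4$ intact. So your overall instinct (connectivity forces a short-way crossing, then symmetry plus a union bound) is right, but the rectangles must be anchored to the origin-centered annulus, not to the random anchor.
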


\begin{proof}
    Notice that an activated enhancement of radius $3n/2$ anchored in $\Lambda_{n/2}$ contains a path from $\partial \Lambda_{n/2}$ to $\partial \Lambda_n.$ The annulus $\Lambda_n \setminus \Lambda_{n/2}$ can be written as a union of four rotated and translated copies of $\Lambda_{n/4,n},$ and such a path must cross one of these rectangles in the short direction. Using a union bound then gives the desired inequality.
\end{proof}

We then use the following result of K\"{o}hler-Schindler and Tassion to turn a positive probability of a crossing of a rectangle the short way into a positive probability of crossing a rectangle the long way~\cite{kohler2023crossing}.

\begin{theorem}[K\"{o}hler-Schindler, Tassion]\label{thm:shortlongcrossing}
    For every $\rho \geq 1,$ there exists a homeomorphism $\psi_{\rho} : \brac{0,1} \to \brac{0,1}$ such that for every positively associated measure $\mathbb{P}$ which is invariant under the symmetries of $\R^2$ and every $n \geq 1,$
    \[\mathbb{P}\paren{H\paren{\Lambda_{n,\rho n}}} \geq \psi_{\rho}\paren{\mathbb{P}\paren{H\paren{\Lambda_{\rho n, n}}}}\]
\end{theorem}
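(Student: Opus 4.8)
As displayed, the inequality holds with $\psi_\rho$ the identity, since $H\paren{\Lambda_{\rho n, n}}\subseteq H\paren{\Lambda_n}\subseteq H\paren{\Lambda_{n,\rho n}}$ as events; the Russo--Seymour--Welsh content actually needed below is the reverse bound $\mathbb{P}\paren{H\paren{\Lambda_{\rho n, n}}}\geq \psi_\rho\paren{\mathbb{P}\paren{H\paren{\Lambda_{n,\rho n}}}}$, a lower bound on crossing a wide box the hard way in terms of crossing a tall box the easy way, and this is what I sketch. The plan is to establish a single \emph{one-step} estimate that improves the aspect ratio by a bounded factor using only positive association and the invariance of $\mathbb{P}$ under lattice symmetries, and then to iterate it.

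For the one-step estimate, from a lower bound on $\mathbb{P}\paren{H\paren{\Lambda_{n,2n}}}$ --- an easy crossing of a tall aspect-$2$ box --- I would derive a lower bound $f\paren{\mathbb{P}\paren{H\paren{\Lambda_{n,2n}}}}$ on $\mathbb{P}\paren{H\paren{\Lambda_{2n,n}}}$, where $f\colon\brac{0,1}\to\brac{0,1}$ is continuous, strictly increasing, and fixes $0$ and $1$. This is a reflection argument of Russo--Seymour--Welsh type in the duality-free form of~\cite{kohler2023crossing}: one conditions on an extremal (say lowest) crossing of a box, uses positive association to extend it to a reflection axis of the lattice in a prescribed portion of that axis, reflects across the axis (so that the reflected event has the same $\mathbb{P}$-probability and meets the original at the axis), and glues the two increasing events by FKG, obtaining a crossing that reaches twice as far. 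The ``reaching the axis in the right place'' event is kept affordable by a square-root trick: among the handful of lattice-symmetric placements, one occurs with probability at least $1-\paren{1-\mathbb{P}\paren{H\paren{\Lambda_{n,2n}}}}^{1/4}$. Interleaving with vertical crossings of squares --- positive by the same trick after a $90^\circ$ rotation --- assembles the pieces into a hard crossing of $\Lambda_{2n,n}$.

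To iterate, note that a hard crossing of a wide box contains a crossing of a square, and a crossing of a square is in particular an easy crossing of a tall box, so the one-step estimate can be chained; gluing $O(\rho)$ overlapping hard crossings of aspect-$2$ boxes, stitched by vertical crossings of the overlap squares (all via FKG), yields a hard crossing of $\Lambda_{\rho n, n}$, and accounting for the loss shows that $\psi_\rho$ may be taken to be the $O(\rho)$-fold composition of $f$ (together with the elementary crossing inclusions above). Every estimate here is scale-free and uses only positive association and the invariance of $\mathbb{P}$ under lattice symmetries, so $\psi_\rho$ depends on neither $n$ nor the particular model; and a finite composition of continuous strictly increasing self-maps of $\brac{0,1}$ fixing $0$ and $1$ is again such a map, hence a homeomorphism of $\brac{0,1}$.

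The main obstacle is the one-step estimate. Classical Russo--Seymour--Welsh exploits the self-duality of bond percolation on $\Z^2$ to control where crossings run, which makes the reflection and gluing nearly automatic; with only FKG and the lattice symmetries available one must instead arrange the conditioning so that (a) the conditional law on one side of the extremal crossing genuinely dominates a useful crossing event and (b) FKG applies to the glued events, all while coping with the fact that a crossing of a long thin box may weave and need not cross any sub-box the short way. Carrying this out --- and keeping each step's loss a fixed power of the input probability, so that iterating produces a genuine homeomorphism rather than a degenerate bound --- is the technical core of~\cite{kohler2023crossing}.
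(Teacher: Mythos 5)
This is a quoted theorem: the paper gives no proof of it, citing K\"{o}hler-Schindler and Tassion~\cite{kohler2023crossing}, so there is no internal argument to compare yours against. Your opening observation is correct and worth flagging as an erratum: with the paper's conventions ($\Lambda_{n_1,n_2}=[-n_1,n_1]\times[-n_2,n_2]$ and $H$ a left--right crossing), the displayed inequality is trivially true with $\psi_\rho=\mathrm{id}$, since $H\paren{\Lambda_{\rho n,n}}\subseteq H\paren{\Lambda_n}\subseteq H\paren{\Lambda_{n,\rho n}}$ as events; the statement as printed has the two rectangles transposed. The inequality actually proved in~\cite{kohler2023crossing}, and the one needed in Proposition~\ref{prop:supercrossing} (to convert the easy crossings of $\Lambda_{n/4,n}$ supplied by Lemma~\ref{lemma:occupancycrossing} into hard crossings of $\Lambda_{6*2^k,2*2^k}$), is the reverse bound $\mathbb{P}\paren{H\paren{\Lambda_{\rho n,n}}}\geq\psi_{\rho}\paren{\mathbb{P}\paren{H\paren{\Lambda_{n,\rho n}}}}$, exactly as you say.

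Concerning your sketch of that reverse inequality: as an outline of an RSW scheme (a one-step aspect-ratio improvement using only positive association, lattice symmetry and the square-root trick, then iteration, with $\psi_\rho$ a finite composition of the one-step homeomorphism) it is a fair description of the shape of the result, and you rightly locate the difficulty in the one-step estimate. However, the specific mechanism you propose --- conditioning on a lowest crossing and reflecting --- is precisely the step that fails for a general positively associated measure: the conditional law given the lowest crossing need not be positively associated or comparable to the unconditioned law, and avoiding any such conditioning is what makes the argument of~\cite{kohler2023crossing} genuinely delicate. So your sketch cannot stand in for the cited proof, but since the theorem is imported from the literature that is not required; the actionable point is that the statement in the paper should be corrected to the reverse inequality so that it matches both the cited result and its application in Proposition~\ref{prop:supercrossing}.
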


\begin{prop}\label{prop:supercrossing}
    Let $\mathscr{E}$ be an enhancement family satisfying the hypotheses of Theorem~\ref{theorem:z2continuity} and let $p > q_c.$ Then for any $\epsilon > 0$ there is an $k$ so that 
    \[\mathbb{P}_p\paren{H\paren{\Lambda_{6*2^k,2*2^k},P_{k-1}}} \geq 1-\epsilon\,.\]
\end{prop}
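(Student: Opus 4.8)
The plan is to run essentially the same renormalization argument as in the case $p<q_c$, but now using the fact that $p>q_c$ to \emph{produce} the required crossing events rather than assuming percolation of $P_\infty$. First I would invoke Corollary~\ref{cor:summable}: since $p>q_c$, pick $\epsilon_0\in(0,1)$ with $p-\epsilon_0 > q_c$ (or simply work at $p$ directly), so that
\[\limsup_{k\to\infty}\mathbb{P}_p\paren{G_k}=1\,.\]
Thus there are infinitely many $k$ for which $\mathbb{P}_p\paren{G_k}$ is as close to $1$ as we like. The event $G_k$ says that an activated enhancement of $\mathscr{F}_k$ is anchored in $\Lambda_{2^k}$; such an enhancement has radius in $[2^{k-1},2^k]$, so after rescaling $n=c2^k$ this is exactly the setup of Lemma~\ref{lemma:occupancycrossing} (with the constant absorbed as noted after the definition of $q_c$). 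Hence for those $k$ we get $\mathbb{P}_p\paren{H\paren{\Lambda_{n/4,n}}}$ bounded below by a constant close to $1$, i.e.\ a short-way crossing of a thin rectangle in $P_k$ (indeed in $P_{k-1}$, since enhancements of $\mathscr{F}_k$ are activated by patterns present already in $P$).

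Next I would apply Theorem~\ref{thm:shortlongcrossing} (K\"ohler--Schindler--Tassion) to the measure $P_{k-1}$, which is positively associated and invariant under the symmetries of $\R^2$ (here I use that the enhancement family is symmetric, so $P_{k-1}$ inherits the lattice symmetries): a positive probability short-way crossing of $\Lambda_{n/4,n}$ upgrades to a positive, $\rho$-independent lower bound on the long-way crossing probability $\mathbb{P}_p\paren{H\paren{\Lambda_{\rho n',n'}}}$ for a suitable aspect ratio $\rho$, in particular for a crossing of something like $\Lambda_{7*2^k,2^k}$. This yields exactly the hypothesis of Lemma~\ref{lemma:torussharp}: a uniform-in-$k$ lower bound $\mathbb{P}_p\paren{H\paren{\Lambda_{7*2^k,2^k},P_{k-1}}}>\epsilon'$ along the subsequence of good $k$'s. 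Feeding this into Lemma~\ref{lemma:torussharp} with any $p'>p$ (one can replace $p$ by a slightly smaller value at the outset so that the target probability is at $p$, or simply state the conclusion with the sharp-threshold slack and note monotonicity) gives, for large $k$ in the subsequence,
\[\mathbb{P}_{p}\paren{H\paren{\Lambda_{6*2^k,2*2^k},P_{k-1}}}>1-\epsilon\,,\]
which is the claim. The subsequence is harmless because the proposition only asserts the existence of some $k$.

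The main obstacle I anticipate is bookkeeping around the interplay of the parameters rather than any deep new idea: one must be careful that the diameter/aspect-ratio constants produced by Lemma~\ref{lemma:occupancycrossing} (rectangles of the form $\Lambda_{n/4,n}$ with $n\asymp 2^k$) and by the KST theorem compose to give precisely a rectangle of shape $7{:}1$ as needed by Lemma~\ref{lemma:torussharp}, and that the $p$-versus-$p'$ shift in Lemma~\ref{lemma:torussharp} is absorbed correctly (e.g.\ by first choosing $p_0\in(q_c,p)$, running the argument at $p_0$, and using monotonicity in $p$ to conclude at $p$). A second minor point is to confirm that $G_k$ being anchored in $\Lambda_{2^k}$ rather than $\Lambda_{n/2}$ matches the hypothesis of Lemma~\ref{lemma:occupancycrossing} up to the rescaling freedom in the definition of $q_c$; this is exactly the remark that replacing $2^k$ by $c2^k$ does not change the threshold. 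Neither issue is substantive, so the proof is short once these constants are lined up.
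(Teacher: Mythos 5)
Your proposal follows the same route as the paper: Lemma~\ref{lemma:occupancycrossing} to turn the event $G_k$ (whose probability is close to $1$ along a subsequence by Corollary~\ref{cor:summable}, since $p>q_c$) into short-way rectangle crossings, Theorem~\ref{thm:shortlongcrossing} to upgrade these to long-way crossings with a uniform positive lower bound, and Lemma~\ref{lemma:torussharp} (run at a parameter $p_0\in(q_c,p)$, concluding at $p$) to boost the crossing probability above $1-\epsilon$; working along a subsequence is indeed harmless since only one $k$ is needed, and the constant mismatch between the anchoring boxes in $G_k$ and $L_n$ is absorbed exactly as you say.

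There is, however, one step whose justification is wrong as written: the parenthetical claim that the crossing produced by an activated enhancement of $\mathscr{F}_k$ lies already in $P_{k-1}$ ``since enhancements of $\mathscr{F}_k$ are activated by patterns present already in $P$.'' Activation is determined by $P$, but the path used in Lemma~\ref{lemma:occupancycrossing} runs through the enhanced set $S_\alpha$, and the edges of $S_\alpha\setminus T_\alpha$ for an enhancement of $\mathscr{F}_k$ are opened only in $P_j$ for $j\ge k$; they need not be present in $P_{k-1}$ at all. So your argument as stated produces crossings in $P_k$, not in $P_{k-1}$, and does not directly give the statement of the proposition. The repair is only a re-indexing, which is what the paper does implicitly: apply Corollary~\ref{cor:summable} to the events $G_{k-1}$, whose activated enhancements belong to $\mathscr{F}_{k-1}$ and hence have their enhanced (connected) sets open in $P_{k-1}$, with radius still of order $2^k$; after enlarging the aspect ratio $\rho$ in Theorem~\ref{thm:shortlongcrossing} by a bounded factor (and using monotonicity of crossings in the lengths of the non-crossed sides), the same chain yields the hypothesis of Lemma~\ref{lemma:torussharp} for the family $P_{k-1}$ at scale $2^k$, and the conclusion follows. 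A second, inconsequential slip: Lemma~\ref{lemma:occupancycrossing} gives a short-way crossing probability at least $\tfrac14\mathbb{P}(L_n)$, so roughly $\tfrac14$, not ``close to $1$''; this does not matter because Theorem~\ref{thm:shortlongcrossing} only needs a uniform positive bound, with the closeness to $1$ coming from the sharp-threshold step.
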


\begin{proof}
    By Lemma~\ref{lemma:occupancycrossing} and Theorem~\ref{thm:shortlongcrossing}, $\mathbb{P}_p\paren{H\paren{\Lambda_{6*2^k,2*2^k},P_{k-1}}}$ is bounded below by a function of $\mathbb{P}_p\paren{G}.$ Then using  Lemma~\ref{lemma:torussharp} and Corollary~\ref{cor:summable} we obtain the desired lower bound.
\end{proof}

\begin{proof}[Proof of Theorem~\ref{theorem:qccontinuity} in the case $p>q_c$]
    For any $p>q_c,$ Proposition~\ref{prop:supercrossing} gives the same bound on the probability of box crossings as in Equation~(\ref{eq:boxcrossing}). This is sufficient to perform the same renormalization construction, so the truncated system must percolate.
\end{proof}

\section{Continuity in Higher Dimensions}

None of the previous arguments work in higher dimensions because we lose the ability to combine overlapping paths. As a result, Question~\ref{question:cont} is more difficult to approach in that setting. However, we are able to show that truncations of sufficiently thick enhancement families percolate above $q_c.$

We say that the enhancement family $\mathscr{E}$ is \textit{rotund} if there is an $c>0$ so that for every $\paren{T_\alpha,S_{\alpha}} \in \mathscr{E},$ $\Lambda_{\lfloor c r_\alpha \rfloor} \subset S_{\alpha},$ where $r_\alpha$ is the radius of $\paren{T_\alpha,S_{\alpha}}.$

\begin{theorem}\label{theorem:diskcontinuity}
    Let $d \geq 2$ and let $\mathscr{E}$ be a rotund family of enhancements. Then for any $p > q_c$ there is an $N \in \N$ so that $P_N$ percolates.    
\end{theorem}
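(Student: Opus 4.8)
The plan is to run the same renormalization scheme as in the planar case, but replace the two-dimensional box-crossing argument (which fails in $d \geq 3$ because overlapping paths can no longer be glued) with an argument that uses the rotundity hypothesis to extract a macroscopic connected ``blob'' from each activated enhancement. First I would observe that for $p > q_c$, Corollary~\ref{cor:summable} gives $\limsup_k \mathbb{P}_p(G_k) = 1$, so we may fix $k$ as large as we like with $\mathbb{P}_p(G_k)$ arbitrarily close to $1$; in fact, by the torus comparison of Lemma~\ref{lemma:symcomp} together with FKG and translation invariance, the probability that there is an activated enhancement of $\mathscr{F}_k$ anchored in any fixed translate of $\Lambda_{2^k}$ is also close to $1$ for large $k$. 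Now fix such a large $k$ and set up the renormalized lattice: tile $\Z^d$ by translates of a box $B$ of side length comparable to $2^k$, and declare a renormalized site open if its box contains (the anchor of) an activated enhancement of $\mathscr{F}_k$ whose enhanced set $S_\alpha$ \emph{also} contains a translate of $\Lambda_{\lfloor c r_\alpha \rfloor} \supset \Lambda_{c' 2^k}$ (using $r_\alpha \asymp 2^k$). By rotundity, every activated enhancement of $\mathscr{F}_k$ has this property, so the renormalized site is open with probability close to $1$.

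The key point is that when two adjacent renormalized sites are open, the two solid cubes $\Lambda_{c' 2^k}$ sitting inside their respective enhanced sets $S_\alpha$ are forced to overlap — or can be made to overlap — provided we choose the renormalization box $B$ small enough relative to $c' 2^k$ (here we are free to rescale: pick $B$ of side length $\tfrac{c'}{100} 2^k$, say, so that adjacent boxes are much closer together than the radius $c' 2^k$ of the guaranteed solid cube). Since each such cube is a connected subgraph of $S_\alpha \subset P_k$ (recall $T_\alpha \subset S_\alpha$ and activation adds all of $S_\alpha$), overlapping cubes are connected to each other in $P_k$. Thus a $\ast$-path (or even a nearest-neighbor path) of open renormalized sites yields a connected component in $P_k$. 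I would then invoke the Liggett--Schonmann--Stacey theorem: the renormalized site percolation is $m$-dependent for some fixed $m$ (dependence range controlled, as noted in the remark before the planar proof, by the bound on $\mathrm{diam}\,S_\alpha \leq 2^{k+2}$ relative to the box size, so $m$ is an absolute constant), and by taking $k$ large the marginal occupation probability exceeds the threshold $1-\delta$ needed to stochastically dominate a supercritical Bernoulli site percolation on $\Z^d$. Hence the renormalized system percolates, and therefore so does $P_k$; taking $N = k$ completes the proof.

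The main obstacle I anticipate is the geometric bookkeeping that makes ``adjacent open boxes $\Rightarrow$ overlapping solid cubes'' actually correct. One has to be careful about \emph{where} inside its box an activated enhancement's anchor sits and where the guaranteed solid cube $\Lambda_{\lfloor c r_\alpha\rfloor}$ sits relative to that anchor — the definition only says $\Lambda_{\lfloor c r_\alpha\rfloor} \subset S_\alpha$ up to the rigid motion $\gamma$ and up to the anchor being \emph{some} vertex of $S_\alpha$, not the center. So the solid cube inside $S_\alpha$ is a translate of $\Lambda_{\lfloor c r_\alpha\rfloor}$ by an unknown vector of length at most $r_\alpha \asymp 2^k$. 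The fix is to choose the renormalization scale so that the box side is a small enough fraction of $c\, 2^k$ that, regardless of this offset, the cube associated to one box always reaches into every neighboring box — equivalently, every open box is entirely contained in the solid cube of each of its open neighbors, up to constants. This forces a concrete inequality between the tiling mesh and $c$, which is routine once set up but easy to get wrong; everything else (the LSS domination, the $m$-dependence, the use of Corollary~\ref{cor:summable}) is standard and parallels the planar argument.
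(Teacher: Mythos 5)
Your renormalization argument is correct in outline, and it takes a more elementary route than the paper does. The paper's proof passes to the continuum: it proves a continuous analogue of the Liggett--Schonmann--Stacey theorem (Lemma~\ref{lemma:continuousLSS}) and uses it, together with Corollary~\ref{cor:summable} (and the remark that $2^k$ may be replaced by $c2^k$ in the definition of $G_k$), to couple the process of anchors of activated enhancements with a Poisson point process whose Boolean disk model is supercritical, arranged so that the Poisson disks lie inside the solid regions that rotundity guarantees around the anchors; percolation of the Boolean model then forces percolation of $P_N$. Your version stays discrete: tile by boxes of side a small fraction of $c2^k$, use the square-root trick plus Corollary~\ref{cor:summable} to make each box contain an anchor of an activated enhancement of $\mathscr{F}_k$ with probability close to $1$, note that the renormalized field is $m$-dependent with $m$ an absolute constant (since $\mathrm{diam}\, S_\alpha \leq 2^{k+1}$ and $T_\alpha \subset S_\alpha$), and apply the discrete LSS theorem directly. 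This avoids Lemma~\ref{lemma:continuousLSS} and the transitivity-of-domination care it requires, at the price of doing the gluing of solid regions by hand, which the paper's continuum formulation absorbs into the inclusion $D(Y,1) \subset D(X,1)$. Both proofs ultimately rest on the same two inputs: high density of anchors at scale $2^k$ above $q_c$, and rotundity supplying a macroscopic connected region per anchor.

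One correction to your discussion of the ``main obstacle'': both the worry about an unknown offset of order $r_\alpha$ and the proposed fix are off. By the paper's conventions, $(0,0)$ is a vertex of $S_\alpha$, the anchor of an activated copy is $\gamma((0,0))$, and rotundity places $\Lambda_{\lfloor c r_\alpha \rfloor}$ --- a cube \emph{centered at the origin} --- inside $S_\alpha$; hence the activated enhancement contains a rotated cube of radius $\lfloor c r_\alpha\rfloor \geq \lfloor c 2^{k-1}\rfloor$ centered at the anchor, and therefore the Euclidean ball of that radius about the anchor. This centering is exactly what makes ``adjacent open boxes have overlapping solid regions'' true once the mesh is a small multiple of $c2^k$. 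If the cube's position really were unknown up to a shift of order $r_\alpha$, shrinking the mesh would not help at all: the cube could sit at distance comparable to $2^k$ from the box containing its anchor, and the cubes of two neighboring boxes could miss each other entirely. So in the write-up, derive the overlap from the anchor-centering (plus the fact that a rotated cube of radius $m$ contains the ball of radius $m$ about its center), not from the claim that a fine enough mesh works ``regardless of this offset.''
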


In order to show that a finite set of enhancements are sufficient to percolate above $q_c,$ we now compare our enhanced set to a Poisson multiscale process studied in~\cite{menshikov2001connectivity}. As a first step, we relate high density disk processes with limited spatial dependence to Poisson disk processes, a continuous analogue of the stochastic domination result of~\cite{liggett1997domination}. In analogy to the discrete setting, we say that a point process $X$ on $\R^d$ is $k$-dependent if for any $A,B \subset \R^d$ with $d\paren{A,B} \geq k,$ $X\mid_A$ and $X\mid_B$ are independent. We also write $D\paren{X,r} \coloneqq \bigcup_{x \in X} B_r\paren{x}$ for the disk process of radius $r$ centered at the points of $X.$

\begin{lemma}\label{lemma:continuousLSS}
    For every $\lambda \in \R^+, k \in \N$ there is a $\delta>0$ so that if $X$ is a $k$-dependent point process on $\R^d$ which is invariant with respect to translations by $\Z^d$ with 
    \[\mathbb{P}_X\paren{\Lambda_{1/2}\text{ is occupied}} > 1-\delta\,\]
    there is a coupling of $X$ and a Poisson point process $Y$ with intensity $\lambda$ so that 
    \[D\paren{Y,1} \subset D\paren{X,1}\]
    almost surely.
\end{lemma}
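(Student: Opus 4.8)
The plan is to discretize the continuous picture so that the discrete Liggett--Schonmann--Stacey domination theorem applies, and then transport the resulting domination back to the continuum. First I would fix a fine integer grid: for a large integer $m$ (to be chosen depending on $\lambda$ and $k$), tile $\R^d$ by the half-open cubes $Q_z = z/m + [0,1/m)^d$ indexed by $z \in \Z^d$. Declare the site $z$ to be \emph{good} if $X \cap Q_z \neq \emptyset$. Because $X$ is $k$-dependent as a point process on $\R^d$, the induced site process on $(\Z^d, Q_\bullet)$ is $(km+\sqrt d\,)$-dependent in the graph sense; and because $X$ is $\Z^d$-translation invariant, the good-site process is invariant under translations by $m\Z^d$, hence we may pass to the coarser lattice $m\Z^d \cong \Z^d$ on which it is genuinely $\Z^d$-invariant. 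The hypothesis $\mathbb{P}_X(\Lambda_{1/2}\text{ occupied}) > 1-\delta$ needs to be upgraded to a statement about individual small cubes: since $\Lambda_{1/2}$ is covered by a bounded number $B = B(d,m)$ of the cubes $Q_z$, a union bound gives that each $Q_z$ fails to be good with probability at most $\delta$, and in fact the conditional version $\mathbb{P}(z \text{ good} \mid \text{good-states outside } \Lambda_{K}(z)) > 1-\delta'$ with $\delta' = B\delta$ follows from the same union bound applied to the conditional law (here $K$ is the dependence range). Thus for $\delta$ small enough the discrete LSS theorem yields that the good-site process stochastically dominates a Bernoulli site percolation of parameter $1-\epsilon$ on $m\Z^d$, for any prescribed $\epsilon$.

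Next I would build $Y$ from the dominated Bernoulli field. Let $(\eta_z)_{z}$ be the i.i.d.\ Bernoulli$(1-\epsilon)$ field coupled beneath the good-site process, so that $\eta_z = 1 \Rightarrow z$ is good $\Rightarrow X \cap Q_z \neq \emptyset$, and in that case select (measurably) one point $x_z \in X \cap Q_z$. The point process $Y_0 := \{x_z : \eta_z = 1\}$ is then contained in $X$ and has at least one point in each ``open'' cube. Since the cubes $Q_z$ have diameter $\sqrt d/m$, any point $y$ with $B_1(y)$ to be covered lies within distance $\sqrt d/m$ of some $x_z$ with $\eta_z = 1$, provided the open cubes are dense enough: more precisely, $D(Y_0, 1 - \sqrt d/m) \subset D(X,1)$ automatically, and one checks that a Bernoulli$(1-\epsilon)$ site field on a $1/m$-mesh, with $\epsilon$ small and $m$ large, stochastically dominates (again by LSS, or directly by a crude Peierls/independence argument) a translation-invariant point configuration whose radius-$(1-\sqrt d/m)$ disk process contains the radius-$1$ disk process of an \emph{intensity-$\lambda$ Poisson process} — this is where the parameters $m$ and $\epsilon$ get tuned against $\lambda$. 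Concretely: a Poisson process of intensity $\lambda$ has, in any cube of side $1/m$, at least one point with probability $1 - e^{-\lambda/m^d}$, which for $m$ large is close to $1$; so after possibly thinning $Y_0$ and rescaling the target radius by the harmless factor $(1-\sqrt d/m)$, one can couple so that $D(Y,1) \subset D(Y_0, 1-\sqrt d/m) \subset D(X,1)$ a.s.

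The main obstacle is the last coupling step: dominating an actual Poisson point process (not merely a Bernoulli grid field) from below, while keeping the coupling monotone at the level of disk processes rather than of points. The clean way around this is to never ask for the points of $Y$ to lie \emph{in} the support of $X$, only for the \emph{disks}: one produces $Y$ as an auxiliary Poisson process, conditions on it, and verifies that whenever every $1/m$-cube meeting $D(Y,1)$ is ``open'' in the Bernoulli field then $D(Y,1) \subset D(X,1)$; since $D(Y,1)$ is covered by boundedly many such cubes per unit volume and the Bernoulli field is as dense as we like, a Borel--Cantelli / stationarity argument shows this inclusion holds simultaneously at all scales almost surely. I would carry the steps out in the order: (i) discretize and verify $k$-dependence and translation invariance of the good-site field; (ii) upgrade the occupancy hypothesis to the conditional small-cube bound via a union bound; (iii) apply discrete LSS to get Bernoulli domination with parameter $1-\epsilon$; (iv) choose $m$ large and $\epsilon$ small in terms of $\lambda$ and couple an intensity-$\lambda$ Poisson process $Y$ so that the disk inclusion holds a.s. Steps (i)--(iii) are routine bookkeeping; step (iv), the continuum coupling, is the crux and is where all the parameter choices are forced.
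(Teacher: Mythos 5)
There is a genuine gap, and it occurs exactly at your step (ii). The hypothesis controls the occupancy of the \emph{unit-size} box $\Lambda_{1/2}$, and occupancy of $\Lambda_{1/2}$ is the \emph{union} over the fine cubes $Q_z$ covering it of the events $\{Q_z \text{ occupied}\}$. A union bound therefore runs in the useless direction: $\mathbb{P}\paren{\Lambda_{1/2}\text{ occupied}} > 1-\delta$ gives no lower bound at all on $\mathbb{P}\paren{Q_z\text{ occupied}}$ for an individual cube of side $1/m$. Concretely, let $X$ place exactly one uniform point in each unit tile, independently across tiles: this process is $\Z^d$-invariant, finitely dependent, and satisfies the hypothesis for every $\delta$, yet each cube of side $1/m$ is occupied with probability of order $m^{-d}$. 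So the fine-mesh good-site field has small marginals, the Liggett--Schonmann--Stacey comparison cannot be applied at scale $1/m$, and the rest of your construction (which relies on the $1-\epsilon$ Bernoulli field on the fine grid being dense) collapses. Your step (iv) is also left as an acknowledged ``crux'' rather than an argument, but the quantitative failure is already at (ii): any correct proof must work at the scale at which the hypothesis is given, namely unit tiles.

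That is what the paper's proof does, and it also shows how to avoid the difficulty you flagged about coupling an actual Poisson process. Tile $\R^d$ by translates of $\Lambda_{1/2}$ and call a tile occupied if it meets $X$; by hypothesis and $k$-dependence, LSS (slightly modified) lets the occupied set $W$ dominate $U'$, the $1$-tile fattening of an independent $\mathrm{Bernoulli}\paren{1-e^{-\lambda}}$ tile set $U$. The key trick is that $1-e^{-\lambda}$ is exactly the probability that a Poisson process of intensity $\lambda$ puts a point in a unit tile: placing in each tile of $U$ an independent Poisson$\paren{\lambda}$ configuration conditioned to be nonempty, and nothing elsewhere, reconstitutes an exact intensity-$\lambda$ Poisson process $Y$, with $D\paren{Y,1}\subset U'\subset D\paren{X,1}$ under the coupling. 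No fine discretization, thinning, radius rescaling, or Borel--Cantelli argument is needed, and the points of $Y$ are never required to lie near points of $X$ --- only the regions are compared. If you want to salvage your outline, replace the $1/m$-mesh by the unit tiling and use this conditioning device in place of your step (iv).
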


\begin{proof}
    Our proof strategy will be to make use of a chain of stochastic dominations between $D\paren{X,1}$ and $D\paren{Y,1}$ with discretized intermediate steps. However, stochastic domination is not a transitive relation in general so we must take some additional care in constructing the final coupling.

    Consider a tiling of $\R^d$ by translations of $\Lambda_{1/2}.$ Say that one of the tiles is occupied if it contains a point of $X,$ and let $W$ be the union of the occupied tiles. By assumption, each tile is occupied with probability at least $1-\delta,$ and the states of tiles are $k$-dependent for some $k \in \N.$ Now let $U$ be a union of a $\mathrm{Bernoulli}\paren{1-e^{-\lambda}}$ set of tiles and let $U'$ be the union of tiles which have nonempty intersection with $U.$ Then by a slight modification of the argument in~\cite{liggett1997domination}, if $\delta$ is sufficiently small, $W$ stochastically dominates $U'.$ Since $W$ is a deterministic function of $X,$ it follows that there is a coupling between $X$ and $U$ so that $U' \subset D\paren{X,1}$ almost surely.

    Now for each tile $T,$ let $Y_T$ be a Poisson point process with intensity $\lambda$ on $T$ conditioned on having at least one point if $T \subset U$ and empty otherwise. Then $Y \coloneqq \bigcup_T Y_T$ is distributed as a Poisson point process with intensity $\lambda$ on $\R^d$ and $D\paren{Y,1} \subset U' \subset D\paren{X,1}$ almost surely as desired.
\end{proof}

\begin{proof}[Proof of Theorem~\ref{theorem:diskcontinuity}]
    Let $\lambda_c$ be the critical intensity for percolation in the Poisson disk process in $\R^d.$ Let $\lambda > \lambda_c$ and let $Z\paren{N}$ be a Poisson point process of intensity $\frac{\lambda}{N^d}.$ Since $p > q_c,$ by Corollary~\ref{cor:summable} and Lemma~\ref{lemma:continuousLSS}, there is an $N \in \N$ so that $P_N$ almost surely contains the set of edges in $D\paren{Z\paren{N},2N}.$ Thus, the connected component of the origin in $P_N$ almost surely contains the sites in the connected component of the origin in $D\paren{Z\paren{N},N},$ and since the latter is supercritical by construction, so is $P_N.$
\end{proof}

We mention in passing that in order to show continuity below $q_c,$ it would be enough to prove the following conjecture.

\begin{conjecture}
    There is an $\epsilon>0$ and an $N \in \N$ so that if $\mathbb{P}$ is an automorphism invariant positively associated percolation measure on $\Z^d$ with 
    \[\mathbb{P}\paren{H\paren{\Lambda_{2N,N,N}}} \geq 1-\epsilon\,,\]
    then 
    \[\mathbb{P}\paren{0 \leftrightarrow \infty} > 0\,.\]
\end{conjecture}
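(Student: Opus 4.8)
The plan is to run a block renormalization at the fixed scale $N$, reducing the conjecture to supercriticality of a coarse-grained site percolation on $\Z^d$. First I would use automorphism invariance to upgrade the single hypothesis $\mathbb{P}\paren{H\paren{\Lambda_{2N,N,\dots,N}}}\geq 1-\epsilon$ to the analogous estimate for every coordinate direction and its reflections, and then invoke the FKG inequality to intersect boundedly many such crossing events on a slightly enlarged box (so that translates of these boxes arranged with overlaps cover $\Z^d$). This produces a ``good box'' event $\mathcal{G}$ with $\mathbb{P}\paren{\mathcal{G}}\geq 1-C_d\epsilon$, designed so that a path of good boxes in the renormalized lattice forces an open path in $\Z^d$. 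Since $\mathcal{G}$ is increasing, the coarse-grained site process $X$ on $\Z^d$ (declare $v$ occupied when $\mathcal{G}$ holds on the box $B_v$) inherits positive association and automorphism invariance, and has marginals at least $1-C_d\epsilon$; conditioning the origin's box to be good costs only a bounded factor, so $\set{0\leftrightarrow\infty}$ follows once $X$ percolates.

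To close the argument in the case the paper actually needs --- where the conjecture is applied to the \emph{finitely dependent} truncated enhancement measures $P_{k-1}$ --- one observes that for a finite-range measure $X$ is $K$-dependent for some finite $K$, so the Liggett--Schonmann--Stacey theorem quoted above yields domination of a Bernoulli site percolation with parameter arbitrarily close to $1$ once $\epsilon$ is small enough, and standard renormalization then gives $\mathbb{P}\paren{0\leftrightarrow\infty}>0$. So the step order is: (i) symmetrize and FKG-combine to obtain $\mathcal{G}$ with probability $\geq 1-C_d\epsilon$; (ii) define $X$ and record its dependence structure; (iii) apply Liggett--Schonmann--Stacey to dominate a near-full Bernoulli site percolation; (iv) conclude percolation of $X$ and hence of the original measure.

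I expect the difficulty to concentrate in two places, both of which are genuine obstacles. The first is the $d\geq 3$ connectivity issue flagged in the paper: two crossing paths of a box in different coordinate directions need not intersect, so $\mathcal{G}$ cannot simply be the FKG-intersection of single-path crossing events if neighbouring good boxes are to be joined inside $\Z^d$. I would try to strengthen $\mathcal{G}$ to a ``dominant cluster'' or ``sheet-crossing'' requirement --- a connected set crossing the box forced to meet every transverse crossing by a topological duality argument --- but extracting such a strengthened estimate from the single hypothesis is nontrivial, since in the Bernoulli setting one uses finite-energy and sprinkling arguments that are unavailable for a general positively associated measure. The second, and I believe the essential, obstacle is the \emph{general} positively associated case: with no mixing assumption $X$ need not be finitely dependent, and positive association points the wrong way for a renormalization --- it lower-bounds, rather than upper-bounds, the probability that a ``blocking'' surface of bad boxes surrounds the origin --- so neither Liggett--Schonmann--Stacey nor a Peierls contour argument applies. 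Overcoming this would require either a new combinatorial idea using only the FKG inequality, or an additional quantitative mixing hypothesis, or (which already suffices for the paper's application) restricting to measures of finite range or with exponential decay of correlations.
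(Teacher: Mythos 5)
You should note first that the statement you were asked to prove is stated in the paper as a \emph{conjecture}: the authors give no proof, and they explicitly remark that it amounts to a finite-size criterion for general automorphism-invariant positively associated measures, which is ``known to be at least as difficult as major open problems in percolation''~\cite{kozma2024reduction}. So there is no paper proof to compare against, and the honest question is whether your outline constitutes a proof. It does not, and in fact your own discussion of the obstacles is an accurate diagnosis of why. Step (i) of your plan quietly assumes the conclusion: you need a ``good box'' event $\mathcal{G}$, built by FKG from finitely many crossing events, with the property that a path of good boxes in the renormalized lattice forces a genuine open path in $\Z^d$. In $d\geq 3$ crossings of a box in different (or even the same) direction need not intersect, so no such $\mathcal{G}$ is available from the hypothesis; the standard fixes (uniqueness of a dominant crossing cluster, sprinkling, finite-energy gluing) all use independence or finite energy, which a general positively associated measure does not have. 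This gluing step is exactly the content of the finite-size criterion the paper says is open, so leaving it as ``I would try to strengthen $\mathcal{G}$'' leaves the whole argument unproved.

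Step (iii) has a second, independent gap: Liggett--Schonmann--Stacey requires a dependence hypothesis (finite range, or a uniform conditional bound given the configuration outside a finite window), and the conjecture assumes only automorphism invariance and positive association. High marginals together with FKG do not imply stochastic domination of a high-density Bernoulli field, so for the measure in the statement neither LSS nor a Peierls contour bound is available --- as you say yourself. Your fallback, proving the claim only for finitely dependent measures such as the truncated enhancement laws $P_{k-1}$, is a different (weaker) statement than the conjecture, and even in that restricted setting the $d\geq 3$ gluing problem from step (i) is still unresolved, so the fallback is not closed either. In short: your proposal is a correct map of where the difficulty lies, and your self-assessment is candid, but both essential steps are missing, and no amount of reordering the remaining steps supplies them; the statement remains open, consistent with the paper's framing.
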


However, this would amount to a finite size criterion, which is known to be at least as difficult as major open problems in percolation~\cite{kozma2024reduction}.

\bibliographystyle{plain}
\bibliography{refs}
\end{document}